%
%
\newcommand{\set}[1]{\left\{#1\right\}}

\newcommand{\norm}[1]{\lVert#1\rVert}
\newcommand{\trinorm}[1]{{\left\vert\kern-0.25ex\left\vert\kern-0.25ex\left\vert #1 
    \right\vert\kern-0.25ex\right\vert\kern-0.25ex\right\vert}}

\newcommand{\jump}[1]{\llbracket #1 \rrbracket}
\newcommand{\mean}[1]{\{#1\}}

\newcommand{\n}{\boldsymbol{n}}
\newcommand{\tg}{\boldsymbol{t}}
\newcommand{\x}{\boldsymbol{x}}

\newcommand{\bu}{\boldsymbol{u}}
\newcommand{\bv}{\boldsymbol{v}}
\newcommand{\bj}{\boldsymbol{j}}
\newcommand{\bH}{\boldsymbol{h}}

\newcommand{\bE}{\boldsymbol{e}}

\renewcommand{\div}{\mathrm{div}}
\newcommand{\curl}{\mathbf{curl}}


\newcommand{\bPi}{\boldsymbol{\Pi}}

\newcommand{\cT}{\mathcal{T}}
\newcommand{\cF}{\mathcal{F}}
\newcommand{\cE}{\mathcal{E}}

\newcommand{\cP}{\mathcal{P}}

\newcommand{\rmH}{\mathrm{H}}
\newcommand{\rmL}{\mathrm{L}}

\newcommand{\C}{\mathbb C}


\documentclass[12pt]{article}

\evensidemargin 0cm
\oddsidemargin 0cm
\topmargin -0.75cm
\textheight 22 true cm
\textwidth 16 true cm

\usepackage{amsmath,amsthm,amssymb,amsfonts}
\usepackage{stmaryrd}
\usepackage{latexsym,graphicx,color}

%
%

\newtheorem{lemma}{Lemma}[section]
\newtheorem{theorem}{Theorem}[section]
\newtheorem{prop}{Proposition}[section]

\date{}
\title{A discontinuous Galerkin method for the time harmonic eddy current problem
\thanks{Support by the University of Trento and by
the Spanish Ministry of Economy Project MTM2013-43671-P.}}

\author{{\sc Ana Alonso Rodr\'iguez}\thanks{
Department of Mathematics, University of Trento,
Trento, Italy,
e-mail: {\tt ana.alonso@unitn.it}}, $\,\,$
{\sc Salim Meddahi}\thanks{Departamento de Matem\'aticas, Facultad de Ciencias,
Universidad de Oviedo, Calvo Sotelo s/n, Oviedo, Espa\~na,
e-mail: {\tt salim@uniovi.es}}\\ and \\
{\sc Alberto Valli}\thanks{Department of Mathematics, University of Trento,
	Trento, Italy, e-mail: {\tt
alberto.valli@unitn.it}}
}

\begin{document}

\maketitle

\begin{abstract}
We introduce and analyze a discontinuous Galerkin method for a time-harmonic eddy current problem formulated in terms of the magnetic field. The scheme is obtained by putting together a DG method for the approximation of the vector field variable representing the magnetic field in the conductor and a DG method for the Laplace equation whose solution is a scalar magnetic potential in the insulator. The transmission conditions linking the two problems are taken into account weakly in the global discontinuous Galerkin scheme. We prove that the numerical method is uniformly stable and obtain  quasi-optimal error estimates in the DG-energy norm.
\end{abstract}

\section{Introduction}
In this paper, we present  a discontinuous Galerkin (DG) approximation of a time-harmonic eddy current problem. The eddy current approximation of Maxwell equations is obtained by disregarding the displacement current term. It is commonly used in applications related with induction heating, transformers, magnetic levitation and non-destructive testing. These problems often involve composite materials and structures, complex transmission conditions and, eventually, boundary  layers due to the skin effect. The ability of DG methods to handle efficiently unstructured meshes with hanging nodes combined with $hp$-adaptive strategies make them well-suited for the numerical simulation of physical systems related to eddy currents.  

The eddy current problem is generally written in terms of  either the electric or the magnetic field, cf. \cite{AVbook}. These two formulations are equivalent at the continuous level but they lead to  different numerical schemes. A discontinuous Galerkin method based on a time-harmonic eddy current problem written in terms of the electric field has been analyzed in the pioneering work of Perugia and Schotzau \cite{PS03}. For the time domain eddy current problem, Ausserhofer et al. introduced in \cite{ABP09} a formulation based on a magnetic vector potential and propose a numerical method that combines a DG approximation in the conductor with the usual $\rmH^{1}$-conforming Lagrange finite element approximation in the insulator. 

Here, we are interested in imposing the magnetic field as primary unknown. The advantage of this approach rests on the reduction of the number of degrees of freedom resulting from the introduction of a scalar magnetic  potential in the  nonconducting medium. The global formulation of the problem consists in a $\rmH(\curl)$-elliptic problem for vector fields that are curl-free in the insulator $\Omega_I$. Our DG formulation is obtained by applying for the Laplace equation posed in $\Omega_{I}$ the usual interior penalty finite element method, that can be traced back to \cite{arnoldIP}, see also \cite{DiPietroErn} and the references cited therein for more details. In the conductor $\Omega_{C}$ we employ, as in \cite{HPSS05, PS03}, the interior penalty method corresponding to the N\'ed\'elec curl-conforming finite element space of the second kind.  We point out that the introduction of discrete harmonic fields is necessary when considering domains of general topology. We prove the stability of the resulting combined DG scheme by exploiting the elliptic character of the problem. We also obtain, under adequate regularity assumptions, quasi-optimal asymptotic error estimates. It is worthwhile to notice that the implementation of the DG-method presented here only requires the use of standard shape functions.  The  curl-conforming finite elements, more precisely, the N\'ed\'elec finite elements of the second kind, are only needed for the theoretical convergence results in Section 5. 


The outline of this paper is as follows. In Section 2 we derive the model problem used in the finite element approximation. We introduce our  DG  formulation in Section 3. Finally, Section 4 is devoted to the convergence analysis, and asymptotic error estimates are provided in Section 5.

\section{The model problem}\label{s2}

Let $\Omega_C\subset \mathbb{R}^3$ be a bounded polyhedral domain with a Lipschitz boundary $\Gamma$.
We denote by $\mathbf{n}_\Gamma$ the unit normal vector on $\Gamma$ that points towards
$\Omega_e:= \mathbb{R}^3\setminus \overline \Omega_C$. In order to illustrate the impact of the conductor's topology in our method, we assume that $\Omega_C$ has a toroidal shape. We notice that the eddy current problem is posed in the whole space with asymptotic conditions on the behaviour of the electric and magnetic fields at infinity. Depending on the nature of the eddy current problem being solved and the geometry involved, a discretization method can be obtained for this problem by either applying a pure finite element  approach on a truncated domain or by using a combination of boundary (BEM) and finite elements (FEM), see \cite{AMV,Hiptmair,MS03,AV09}.  The FEM-BEM formulation is posed in the conductor but its implementation is more difficult and it leads to more complex algebraic linear systems of equations. The FEM method 
needs a large computational domain, but it is simpler and it can provide an alternative in many practical situations. It is the option that we will consider in the following. To this end, we introduce a bounded domain $D$ containing in its interior $\overline{\Omega}_{C}$ and whose connected boundary $\Sigma= \partial D$ is located at a large enough distance from the conductor $\Omega_{C}$. The bounded domain $\Omega_I:= D\setminus \overline \Omega_C$ represents then the nonconducting region of the computational domain $D$. 

Under our assumptions, the first de Rham cohomology group $\mathcal H^1(\Omega_I)$ of $\Omega_{I}$, namely, the space of curl-free vector fields that are not gradients, has dimension one.  If we assume that $\Omega_I$ is a polyhedral domain endowed with a tetrahedral mesh, one can use the technique given in \cite{BRS02} for the explicit construction of a piecewise-linear vector field  $\boldsymbol \rho$ spanning $\mathcal H^1(\Omega_I)$ and satisfying $\boldsymbol \rho \times {\bf n}_\Sigma = {\bf 0}$ on $\Sigma$, where ${\bf n}_\Sigma$ denotes the outward unit normal vector to $\Sigma$. For an alternative construction of  $\boldsymbol \rho$ see Alonso Rodr\'{\i}guez et al. \cite{ABGV13}.

The eddy current problem formulated in terms of the magnetic field $\bH$  and the scalar magnetic potential $\psi$  reads as follows:
\begin{equation}\label{model0}
\begin{array}{rcll}
\imath \omega \mu \bH + \curl\, \bE  &=& \boldsymbol 0         &\text{in $D$}\\[2ex]
 \bE  &=& \sigma^{-1} ( \curl\, \bH - \bj )         &\text{in $\Omega_C$}\\[2ex]
        \bH &=& \nabla \psi + k \boldsymbol \rho  &\text{in $\Omega_I$}\\[2ex]
        \psi &=& 0  & \text{on } \Sigma  \, ,
\end{array}
\end{equation}
where $\bj$ is the applied current density, $\mu$ is the magnetic permeability and $\sigma$ 
is the electric conductivity. In what follows, we assume that $\mu$ and $\sigma$ are positive piecewise constant functions in $\Omega_{C}$ and that $\mu_{|\Omega_I}= \mu_0$ is the permeability constant of vacuum. It follows from the first equation \eqref{model0} that  
\begin{equation}\label{lap}
0=\div(\bH_{|\Omega_I})= \div(\nabla \psi + k \boldsymbol \rho)\, \hbox{ in } \Omega_I\, .
\end{equation}
We point out here that the electric field $\bE$ is not uniquely determined in $\Omega_{I}$. 
Nevertheless,  the tangential components of the magnetic field and the tangential components of 
any admissible representation of the electric field should be continuous across 
the interface $\Gamma$, i.e.,  
\begin{equation}\label{magt}
\bH|_{\Omega_{C}} \times \n_\Gamma = (\nabla \psi + k \boldsymbol \rho) \times \n_\Gamma \, .
\end{equation}
and
\begin{equation}\label{mage}
\bE_{|\Omega_C} \times \n_\Gamma = \bE_{|\Omega_I} \times \n_\Gamma.
\end{equation}
The electric field $\bE$  is considered here as an auxiliary variable, it will be removed from the formulation. Hence, we should deduce from \eqref{mage} a transmission condition relating $\bH$ and $\varphi$ on $\Gamma$. Applying the surface divergence operator 
$\div_\Gamma$ to both side of \eqref{mage} and recalling that $\div_\Gamma(\bE \times \n_\Gamma)= \curl \,\bE \cdot \n_\Gamma$ we deduce that the field $\curl \,\bE$ admits continuous normal components across $\Gamma$. As a consequence of the  first equation of \eqref{model0}, $\mu\bH$ should also have continuous normal components across $\Gamma$, i.e.,   
\begin{equation}\label{transe}
\mu \bH \cdot \n_\Gamma = \mu_{0}(\nabla \psi + k \boldsymbol \rho) \cdot \n_\Gamma \,. 
\end{equation}
Finally, we deduce from \eqref{mage} and the property $\curl \, \boldsymbol \rho = {\bf 0}$  that 
$$
\int_\Gamma  \bE_{|\Omega_C}  \times \n_\Gamma \cdot \boldsymbol \rho =\int_\Gamma  \bE_{|\Omega_I}  \times \n_\Gamma \cdot \boldsymbol \rho = \int_{\Omega_I} \curl \, \bE \cdot \boldsymbol \rho,
$$
 thus
\begin{equation}\label{scal}
\int_\Gamma \sigma^{-1} ( \curl\, \bH - \bj )  \cdot (\boldsymbol \rho \times \n_\Gamma)= \imath \, \omega \int_{\Omega_I} \mu_0  (\nabla \psi + k \boldsymbol \rho) \cdot \boldsymbol \rho \, .
\end{equation}

From now on, for the sake of simplicity in notations, $\bH$ will stand for $\bH|_{\Omega_{C}}$. 
Taking into account \eqref{lap}, \eqref{magt}, \eqref{transe} and \eqref{scal}, we deduce that 
the eddy current problem can be formulated in terms of the magnetic field and its scalar potential representation in the insulator in the following form: Find $\bH:\, \Omega_{C}\to \mathbb C^{3}$, $\psi:\, \Omega_{I}\to \mathbb C$ and $k\in \mathbb C$ such that,
\begin{align}
\imath \omega \mu \bH + \curl\, [\sigma^{-1} ( \curl\, \bH - \bj )] &= \boldsymbol 0          &\text{in $\Omega_C$}\label{ModelProblem1}\\[2ex]
\bH \times \n_\Gamma & =   (\nabla \psi + k \boldsymbol \rho) \times \n_\Gamma  &\text{on $\Gamma$}\label{ModelProblem2}\\[2ex]
\mu\, \bH \cdot \n_\Gamma & =   \mu_0(\nabla \psi + k \boldsymbol \rho) \cdot \n_\Gamma &\text{on $\Gamma$}\label{ModelProblem3}\\[2ex]
\int_\Gamma \sigma^{-1} ( \curl\, \bH - \bj )  \cdot (\boldsymbol \rho \times \n_\Gamma)&=\imath \,  \omega \mu_0 \int_{\Omega_I}  (\nabla \psi + k \boldsymbol \rho) \cdot \boldsymbol \rho\label{ModelProblem4}\\[2ex]
\div(\nabla \psi + k \boldsymbol \rho) & =  0 &\text{in $\Omega_I$}\label{ModelProblem5}\\[2ex]
\psi &= 0   &\text{on } \Sigma\label{ModelProblem6}  \, .
\end{align}

We refer to \cite[Section 5]{AVbook} for a proof of the well-posedness of problem \eqref{ModelProblem1}-\eqref{ModelProblem6}.

\section{The discrete problem}\label{section3}

\subsection{Notations}
Given a real number $r\geq 0$ and a polyhedron $\mathcal O\subset \mathbb R^d$, $(d=2,3)$,
we denote the norms and seminorms of the usual Sobolev space
$\rmH^r(\mathcal O)$ by $\|\cdot \|_{r,\mathcal O}$ and $|\cdot|_{r,\mathcal O}$ respectively
(cf. \cite{McLean}). We use the convention $\rmL^2(\mathcal O):= \rmH^0(\mathcal O)$ and ${\bf L}^2(\mathcal O):= [\rmL^2(\mathcal O)]^3$.
We recall that, for any $t \in [-1,\: 1 ]$, the spaces $\rmH^{t}(\partial \mathcal O)$
have an intrinsic definition (by localization) on the Lipschitz surface $\partial \mathcal O$
due to their invariance under Lipschitz coordinate transformations. Moreover, for all $0< t\leq 1$,
$\rmH^{-t}(\partial\mathcal O)$ is the dual of $\rmH^{t}(\partial\mathcal O)$ with respect
to the pivot space $\rmL^2(\partial \mathcal{O})$. Finally we consider $\mathbf{H}(\curl, \mathcal O):=\{ \bv \in \rmL^2(\mathcal O)^3 \, : \, \curl \, \bv \in \rmL^2(\mathcal O)^3\}$ and endow it with its usual Hilbertian norm $\norm{\bv}_{\mathbf{H}(\curl, \mathcal O)}^2:=
\norm{\bv}_{0, \mathcal O}^2 + \norm{\curl \, \bv}_{0, \mathcal O}^2$.

We consider a sequence  $\{\cT_h\}_h$ of conforming and shape-regular triangulations of 
$\overline \Omega_C \cup \overline \Omega_I$. 
We assume that each partition $\cT_h$ consists of tetrahedra $K$ of diameter $h_K$ and unit outward normal to $\partial K$ denoted $\n_K$. We also assume that for all $K\in \cT_h$ we have either $K\subset \overline\Omega_C$ or $K\subset \overline \Omega_I$ and denote
\[
\cT_h^{\Omega_C}:= \set{K\in \cT_h;\quad K\subset \overline\Omega_C},\qquad \cT_h^{\Omega_I}:= \set{K\in \cT_h;\quad K\subset \overline\Omega_I}.
\]
We also assume that the meshes $\{\cT_h^{\Omega_C}\}_h$ are aligned with the discontinuities of the coefficients $\sigma$ 
and $\mu$. The parameter $h:= \max_{K\in \cT_h} \{h_K\}$ represents the mesh size.

We denote by $\cF_h^0(\Omega_C)$ and $\cF_h^0(\Omega_I)$ the sets of interior faces of the triangulations 
$\cT_h^{\Omega_C}$ and $\cT_h^{\Omega_I}$ respectively. 
We also introduce the sets of boundary faces
\[
\cF_h^\Gamma:= \set{F = \overline K\cap\overline{K'};\quad  K\in \cT_h^{\Omega_C},\,\, K'\in \cT_h^{\Omega_I}}
\quad 
\text{and}
\quad
\cF_h^\Sigma:= \set{F = \partial K \cap \Sigma;\quad K\in \cT_h^{\Omega_I}}
\]
and consider
\[
\cF_h^{\Omega_C}:=\cF_h^0(\Omega_C) \cup  \cF_h^\Gamma, \quad \cF_h^{\Omega_I}:=\cF_h^0(\Omega_I) \cup  \cF_h^\Sigma
\quad \text{and} \quad \cF_h := \cF_h^{\Omega_C}\cup \cF_h^{\Omega_I}.
\]

We notice that $\set{\cF_h^\Gamma}_h$ is a shape regular family of triangulations of $\Gamma$ into triangles $T$ of diameter $h_T$. Finally, we consider the set $\cE_h$ of edges $e = \overline T\cap \overline{T'}$ (where  $T$ and  $T'$ are two adjacent triangles from  $\cF_h^\Gamma$). 

Let $\mathcal{O}_h$ be anyone of the previously introduced partitions of $\overline\Omega_C\cup \overline \Omega_I$, $\overline\Omega_C$, $\overline \Omega_I$ or $\Gamma$ and let $E$ be a generic element of the given partition. We introduce for any  $s\geq 0$ the broken Sobolev spaces
\[
 \rmH^s(\mathcal{O}_h) := \prod_{E\in \mathcal{O}_h} \rmH^s(E)\quad \text{and} \quad  \mathbf{H}^s(\mathcal{O}_h) := \prod_{E\in \mathcal{O}_h} \mathbf{H}^s(E)^3\, .
 \]

For each $w:= \set{w_E}\in \rmH^s(\mathcal{O}_h)$, the components  $w_E$ 
represents the restriction $w|_E$. When no confusion arises, the restrictions will be written
without any subscript.

The space 
$\rmH^s(\mathcal{O}_h)$ is endowed with the Hilbertian norm
\[
\norm{w}_{s,\mathcal{O}_h}^2 := \sum_{E\in \mathcal{O}_h} \norm{w_E}^2_{s,E}.
\]

We consider identical definitions for the norm and the seminorm on the vectorial version $\mathbf{H}^s(\mathcal{O}_h)$.
We use the standard conventions $\rmL^2(\mathcal{O}_h):=\rmH^0(\mathcal{O}_h)$ and 
$\mathbf{L}^2(\mathcal{O}_h):=\mathbf{H}^0(\mathcal{O}_h)$ and introduce the  bilinear forms
\[
(w, z)_{\mathcal{O}_h} = \sum_{E\in \mathcal{O}_h} \int_E w_E z_E, \quad \forall w, z \in \rmL^2(\mathcal{O}_h)
\]
and
\[
(\boldsymbol{w}, \boldsymbol{z})_{\mathcal{O}_h} = \sum_{E\in \mathcal{O}_h} \int_E \boldsymbol{w}_E\cdot \boldsymbol{z}_E, \quad \forall \boldsymbol{w}, \boldsymbol{z}\in \mathbf{L}^2(\mathcal{O}_h).
\]


Assume that  $(\bv,\varphi,m)\in \mathbf{H}^{1+s}(\cT_h^{\Omega_C})\times \rmH^{1+s}(\cT_h^{\Omega_I})\times \C$, with $s>1/2$.
Moreover, let us recall that $\boldsymbol \rho$ has been constructed as a piecewise-linear vector field, therefore its restriction to any face $F$ has a meaning.
We define $\curl_h\bv\in \mathbf{H}^s(\cT_h^{\Omega_C})$ by $(\curl_h \bv)|_K = \curl \, \bv_K$, for all $K\in \cT_h^{\Omega_C}$; $\nabla_h \varphi \in \mathbf{H}^s(\cT_h^{\Omega_I})$ by $(\nabla_h \varphi)|_K = \nabla \varphi_K$,  for all $K\in \cT_h^{\Omega_I}$. 
We define also the averages $\mean{\bv}_{\cF}\in \mathbf{L}^2(\cF_h^{\Omega_C})$ and
$\mean{\nabla_h \varphi+m\boldsymbol \rho}_{\cF}\in \mathbf{L}^2(\cF_h^{\Omega_I})$ by 
\begin{equation} \label{average1}
\begin{array}{l}
\mean{\bv}_{\cF}|_F := \mean{\bv}_F  \hbox{ with}\\ \\
\mean{\bv}_F:= \left\{
\begin{array}{ll}
(\bv_K + \bv_{K'})/2 & \text{if $F=K\cap K'\in \cF_h^0(\Omega_C)$}\\[.1cm]
\bv_K & \text{if $F\subset \partial K$ and $F \in \cF_h^\Gamma$},
\end{array} \right.
\end{array}
\end{equation}
and
\begin{equation} \label{average2}
\begin{array}{l}
 \mean{\nabla_h\varphi+m \boldsymbol \rho}_{\cF}|_F := \mean{\nabla_h \varphi + m \boldsymbol \rho}_F \hbox{ with} \\ \\
 \mean{\nabla_h \varphi + m \boldsymbol \rho}_F :=\left\{ \begin{array}{l} 
(\nabla \varphi_K + \nabla \varphi_{K'})/2 + m (\boldsymbol \rho_K+\boldsymbol \rho_{K'})/2 \\ \hspace{3cm} \text{if $F=K\cap K'\in \cF_h^0(\Omega_I)$}\\[.1cm]
\nabla \varphi_K + m \boldsymbol \rho_K \\
\hspace{3cm} \text{if $F\subset \partial K$ and $F \in \cF_h^\Sigma$}\, ,
\end{array} \right.
\end{array}
\end{equation}
and the jumps $\jump{(\bv,\varphi,m)}_{\cF}\in \mathbf{L}^2(\cF_h^{\Omega_C})$ and $\jump{\varphi\n}_{\cF}\in \mathbf{L}^2(\cF_h^{\Omega_I})$ by
\begin{equation} \label{jump1}
\begin{array}{l}
\jump{(\bv,\varphi,m)}_{\cF}|_F :=\jump{(\bv,\varphi,m)}_F \hbox{ with}
\\ \\
\jump{(\bv,\varphi,m)}_F:=\left\{   \hspace{-.2cm}  \begin{array}{l}
\jump{\bv\times \n}_F:=\bv_K \times \n_K + \bv_{K'}\times \n_{K'} \\ \hspace{2cm} \text{if $F=K\cap K'\in \cF_h^0(\Omega_C)$}\\[.1cm]
\bv_K \times \n +( \nabla \varphi_{K'} +  m \boldsymbol \rho_{K'}) \times \n_{K'}\\ \hspace{2cm} \text{if }F= K\cap K'\in \cF_h^\Gamma \text{ with } K\in \cT_h^{\Omega_C}, \,K'\in \cT_h^{\Omega_I}\, ,
\end{array} \right.
\end{array}
\end{equation}
and
\begin{equation} \label{jump2}
\begin{array}{l}
\jump{\varphi\n}_{\cF}|_F := \jump{\varphi\n}_F \hbox{ with} \\ \\
\jump{\varphi\n}_F := \left\{
\begin{array}{ll}
\varphi_K \n_K + \varphi_{K'}\n_{K'} & \text{if $F=K\cap K'\in \cF_h^0(\Omega_I)$}\\[.1cm]
\varphi_K \n_\Sigma & \text{if $F\subset \partial K$ and $F \in \cF_h^\Sigma$}\, .
\end{array} \right.
\end{array}
\end{equation}
Similarly, we define the edge averages $\mean{\bv}_{\cE}\in \mathbf{L}^2(\cE_h)$ by 
\[
\mean{\bv}_{\cE}|_e :=\mean{\bv}_e \hbox{ with } \mean{\bv}_e:=(\bv_{K_e} + \bv_{K'_e})/2
\]
where $K_e, K'_e\in \cT_h^{\Omega_C}$ are such that $T=\partial K_e \cap \Gamma\in \cF_h^\Gamma$, 
$T'=\partial K'_e \cap \Gamma\in \cF_h^\Gamma$
and $e = T\cap T'$. We also need to define the edge jumps $\jump{\varphi\tg}_{\cE}\in \mathbf{L}^2(\cE_h)$ by
\[
\jump{\varphi\tg}_{\cE}|_e :=   \jump{\varphi\tg}_e \hbox{ with }
 \jump{\varphi\tg}_e := 
 \varphi_{K_e} \tg_e + \varphi_{K'_e} \tg'_e\, ,
\]
where $K_e, K_e'$ are in this case the elements from $\cT_h^{\Omega_I}$ such that $T=\partial K_e \cap \Gamma\in \cF_h^\Gamma$, 
$T'=\partial K'_e \cap \Gamma\in \cF_h^\Gamma$ and $e = T\cap T'$. Here, $\tg_e$, $\tg'_e$ are the tangent unit vectors along the edge $e$ given by
$\tg_e = (\n_\Gamma \times \boldsymbol \nu_{T})|_e$ and  $\tg_e = (\n_\Gamma \times \boldsymbol \nu_{T'})|_e$ where $\boldsymbol \nu_{T}$ and $\boldsymbol \nu_{T'}$ are
the   outward unit  normal vector to $\partial T$ and $\partial T'$ respectively that lies on the tangent plane to $\Gamma$.

\subsection{The DG formulation}

Hereafter, given an integer $k\geq 0$ and a domain
$\mathcal O\subset \mathbb{R}^3$, $\cP_k(\mathcal O)$ denotes the space of polynomials of degree at most $k$ on $\mathcal O$.
For any $m\geq 1$, we introduce the finite element spaces
\[
 \mathbf{X}_h := \prod_{K\in \cT_h^{\Omega_C}} P_m(K)^3
 \quad \text{and} \quad
 V_h := \prod_{K\in \cT_h^{\Omega_I}} \tilde\cP_{m}(K),
\]
where
\begin{equation}\label{tildePm}
 \tilde\cP_{m}(K):=\begin{cases}
 \cP_m(K) & \text{if $\partial K\cap \Gamma \notin \cF_h^\Gamma$ }\\
 \cP_m(K) + \cP_{m+1}^T(K) & \text{if $T=\partial K\cap \Gamma \in \cF_h^\Gamma$}
 \end{cases}
\end{equation}
with $\cP_{m+1}^T(K)$ representing the subspace of $\cP_{m+1}(K)$ spanned by the elements of the Lagrange basis corresponding to nodal points located on $T$. It follows that $\cP_m(K) \subset \tilde\cP_m(K) \subset 
\cP_{m+1}(K)$ and if $T = \partial K \cap \Gamma \in \cF_h^\Gamma$ then $\tilde\cP_m(K)|_T = \cP_{m+1}(T)$.
\vspace{0.25cm}

 
Let $h_\cF\in \prod_{F\in \mathcal{F}_h} \cP_0(F)$ and $h_\cE\in \prod_{e\in \mathcal{E}_h} \cP_0(e)$ be defined by 
$h_\cF|_F := h_F$ $, \forall F \in \mathcal{F}_h$ and $h_\cE|_e := h_e$ $, \forall e \in \cE_h$ respectively. 
By virtue of our hypotheses on $\sigma$ and on the triangulation $\cT_h^{\Omega_C}$, we may consider that 
$\sigma$ is an element of $\prod_{K\in \mathcal{T}_h^{\Omega_C}} \cP_0(K)$ and denote 
$\sigma_K:= \sigma|_K$ for all $K\in \mathcal{T}_h^{\Omega_C}$.
We introduce $\mathtt{s}_\cF\in \prod_{F\in \mathcal{F}_h(\Omega_C)} \cP_0(F)$ defined by 
$\mathtt{s}_F := \min(\sigma_K, \sigma_{K'})$, if $F = \partial K \cap \partial K'\in \cF_h^0(\Omega_C)$ and  
$\mathtt{s}_F := \sigma_K$,  if $F = \partial K \cap \Gamma\in \cF_h^\Gamma$. We also need to define  
$\mathtt{s}_\cE\in \prod_{e\in \mathcal{E}_h} \cP_0(e)$ given by $\mathtt{s}_e = \min(\sigma_{K_e}, \sigma_{K'_e})$
where $K_e, K_e\in \cT_h^{\Omega_C}$ are such that $T=\partial K_e \cap \Gamma\in \cF_h^\Gamma$, 
$T'=\partial K'_e \cap \Gamma\in \cF_h^\Gamma$
and $e = T\cap T'$.
\vspace{0.25cm}

We consider, for $s>1/2$, the Hilbert space 
\[
\mathbf{X}^s(\cT_h^{\Omega_C}) := \left\{\bv \in \mathbf{H}^s(\cT_h^{\Omega_C});\quad \curl_h \bv \in \mathbf{H}^{1/2+s}(\cT_h^{\Omega_C})\right\}
\]
and define on $\mathbf{X}^s(\cT_h^{\Omega_C}) \times \rmH^{1+s}(\cT_h^{\Omega_I}) \times \C$ the sesquilinear forms
\begin{align*}
A_h^{\Omega_C}((\bu, \phi,c), &(\bv, \varphi,m)) :=  \imath \omega \left(\mu \bu, \bv\right)_{\cT_h^{\Omega_C}} + \left(\sigma^{-1} \curl_h \bu, \curl_h \bv\right)_{\cT_h^{\Omega_C}} \\ &+ \left(\mean{\sigma^{-1} \curl_h \bu}_{\cF}, \jump{(\bv,\varphi,m)}_{\cF}\right)_{\cF_h^{\Omega_C}}
+ \left(\mean{\sigma^{-1} \curl_h \bv}_{\cF}, \jump{(\bu, \phi, c)}_{\cF}\right)_{\cF_h^{\Omega_C}}\\ &+ \mathtt{a}^{\Omega_C} \left( \mathtt{s}^{-1}_{\cF} h_{\cF}^{-1} \jump{(\bu, \phi, c)}_{\cF}, \jump{(\bv, \varphi, m)}_{\cF}\right)_{\cF_h^{\Omega_C}}\, ,
\end{align*}
\begin{align*}
A_h^{\Omega_I}(&(\bu, \phi, c), (\bv, \varphi, m)):=  \imath \omega \mu_0 (\nabla_h \phi+c \boldsymbol \rho, \nabla_h \varphi+m \boldsymbol \rho)_{\cT_h^{\Omega_I}} +  \dfrac{\mathtt{a}^{\Omega_I}}{\omega \mu_0}\left(h_{\cF}^{-1} \jump{\phi\n}_{\cF}, \jump{\varphi\n}_{\cF}\right)_{\cF_h^{\Omega_I}}\\
&- \imath \omega \mu_0\left(\mean{\nabla_h \phi+c \boldsymbol \rho}_{\cF}, \jump{\varphi\n}_{\cF}\right)_{\cF_h^{\Omega_I}} 
- \imath \omega \mu_0\left(\mean{\nabla_h \varphi+m \boldsymbol \rho}_{\cF}, \jump{\phi\n}_{\cF}\right)_{\cF_h^{\Omega_I}}
\\ 
&- \left(\mean{\sigma^{-1} \curl_h \bu}_{\cE}, \jump{\varphi\tg}_{\cE}\right)_{\cE_h} - \left(\mean{\sigma^{-1} \curl_h \bv}_{\cE}, \jump{\phi\tg}_{\cE}\right)_{\cE_h}
+ \alpha \left( \mathtt{s}^{-1}_{\cE} h_{\cE}^{-2} \jump{\phi\tg}_{\cE}, \jump{\varphi\tg}_{\cE}  \right)_{\cE_h},
\end{align*}
and let
\[
A_h((\bu, p,c), (\bv, \varphi,m)):= A_h^{\Omega_C}((\bu, \phi, c), (\bv, \varphi, m)) + A_h^{\Omega_I}((\bu, \phi, c), (\bv, \varphi, m))\, .
\]

Let us assume that $\sigma^{-1} \bj\in \mathbf{H}^{1/2+s}(\cT_h^{\Omega_C})$ with $s>1/2$. Then we can define the linear form $L_h(\cdot)$ on $\mathbf{X}^s(\cT_h^{\Omega_C}) \times \rmH^{1+s}(\cT_h^{\Omega_I}) \times \C$ by
\[
L_h((\bv, \varphi)) := (\sigma^{-1} \bj, \curl_h \bv)_{\cT_h^{\Omega_C}} + \left( \mean{\sigma^{-1} \bj}_{\cF}, \jump{(\bv, \varphi, m)}_{\cF} \right)_{\cF_h^{\Omega_C}}
- \left(\mean{\sigma^{-1} \bj}_{\cE}, \jump{\varphi \tg}_{\cE}\right)_{\cE_h}.
\]

We propose the following DG formulation of problem \eqref{ModelProblem1}-\eqref{ModelProblem6}: 
\begin{equation}\label{DG-FEM}
\begin{array}{l}
\text{Find $(\bH_{h}, \psi_h,k_h)\in \mathbf{X}_h\times V_h\times \C$ such that,}\\[2ex]
A_h((\bH_{h}, \psi_h,k_h), (\bv, \varphi, m)) = L_h((\bv, \varphi, m))\quad \forall \,  (\bv, \varphi, m)\in \mathbf{X}_h\times V_h\times \C\, .
\end{array}
\end{equation}

The existence and uniqueness of the solution of this problem is proved in Theorem~\ref{LM}

We end this section by showing that the DG scheme \eqref{DG-FEM} is consistent. 
\begin{prop}\label{consistency0}
	Let $(\bH, \psi, k)\in \mathbf{H}(\curl, {\Omega_C})\times \rmH^1(\Omega_I)\times \C$ be the solution of \eqref{ModelProblem1}-\eqref{ModelProblem6}. Under the assumption $\sigma^{-1} \bj\in \mathbf{H}^{1/2+s}(\cT_h^{\Omega_C})$ and 
	the regularity conditions $(\bH,\psi, k) \in \mathbf{X}^s(\cT_h^{\Omega_C})\times \rmH^{1+s}(\cT_h^{\Omega_I})\times \C$, with $s>1/2$, we have that
	\[
	A_h((\bH, \psi, k), (\bv, \varphi, m)) = L_h((\bv, \varphi, m)) \quad
	\forall \, (\bv,\varphi, m)\in \mathbf{X}_h\times V_h \times \C.
	\]
\end{prop}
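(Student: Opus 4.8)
The plan is to insert the exact solution $(\bH,\psi,k)$ into each term of $A_h$, integrate by parts element by element, and check that every boundary contribution either cancels against a term of $L_h$ or vanishes because of the continuity properties of the exact solution. First I would record the consistency facts that make the jump/stabilization terms disappear: since $\bH\in\mathbf{H}(\curl,\Omega_C)$, its tangential trace is single-valued across any interior face $F\in\cF_h^0(\Omega_C)$, so $\jump{\bH\times\n}_F=\boldsymbol 0$; since $\psi\in\rmH^1(\Omega_I)$, $\jump{\psi\n}_F=\boldsymbol 0$ on $F\in\cF_h^0(\Omega_I)$ and $\jump{\psi\tg}_e=\boldsymbol 0$ on $e\in\cE_h$; on $\cF_h^\Sigma$ the boundary condition \eqref{ModelProblem6} gives $\psi=0$, so those jumps vanish too; and on $\cF_h^\Gamma$ the transmission condition \eqref{ModelProblem2} gives precisely $\bH_K\times\n + (\nabla\psi_{K'}+k\boldsymbol\rho_{K'})\times\n_{K'}=\boldsymbol 0$, i.e.\ $\jump{(\bH,\psi,k)}_F=\boldsymbol 0$. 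Consequently all three penalty terms (the $\mathtt{a}^{\Omega_C}$, $\mathtt{a}^{\Omega_I}$ and $\alpha$ terms) vanish, as do the two symmetrizing terms $(\mean{\sigma^{-1}\curl_h\bv}_\cF,\jump{(\bH,\psi,k)}_\cF)_{\cF_h^{\Omega_C}}$, $(\mean{\nabla_h\varphi+m\boldsymbol\rho}_\cF,\jump{\psi\n}_\cF)_{\cF_h^{\Omega_I}}$ and $(\mean{\sigma^{-1}\curl_h\bv}_\cE,\jump{\psi\tg}_\cE)_{\cE_h}$. Also, because the exact solution is smooth enough, $\curl_h\bH=\curl\bH$ and $\nabla_h\psi=\nabla\psi$ are the genuine (single-valued) fields, so $\mean{\sigma^{-1}(\curl_h\bH-\bj)}_\cF$ and $\mean{\sigma^{-1}(\curl_h\bH-\bj)}_\cE$ equal the honest restrictions on every interior face and edge.

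Next I would treat the two volume-plus-consistency blocks. In $\Omega_C$: integrating $(\sigma^{-1}\curl\bH,\curl_h\bv)_{\cT_h^{\Omega_C}}$ by parts on each $K\in\cT_h^{\Omega_C}$ produces $(\curl[\sigma^{-1}\curl\bH],\bv)_K$ plus a boundary term $\int_{\partial K}(\sigma^{-1}\curl\bH\times\n_K)\cdot\bv$. Using $\curl[\sigma^{-1}(\curl\bH-\bj)]=-\imath\omega\mu\bH$ from \eqref{ModelProblem1}, the volume part combines with $\imath\omega(\mu\bH,\bv)_{\cT_h^{\Omega_C}}$ and with the $(\sigma^{-1}\bj,\curl_h\bv)$ term of $L_h$; the remaining boundary sum reorganizes, using that $\sigma^{-1}(\curl\bH-\bj)$ has single-valued tangential trace on interior faces (this is the tangential continuity of the "electric field" $\bE$, the continuous analogue of \eqref{mage}), into face integrals over $\cF_h^0(\Omega_C)\cup\cF_h^\Gamma$ of $\mean{\sigma^{-1}(\curl\bH-\bj)}_F\cdot\jump{\bv\times\n}$. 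On $\cF_h^\Gamma$ I must be careful: there $\jump{(\bv,\varphi,m)}_F = \bv_K\times\n + (\nabla\varphi_{K'}+m\boldsymbol\rho_{K'})\times\n_{K'}$, so the $\Gamma$-faces also feed a term $-\int_\Gamma \sigma^{-1}(\curl\bH-\bj)\cdot((\nabla\varphi+m\boldsymbol\rho)\times\n_\Gamma)$, which I will need to pair later. Assembling, the $\Omega_C$ part of $A_h((\bH,\psi,k),\cdot)$ minus the first two terms of $L_h$ equals exactly $(\mean{\sigma^{-1}(\curl\bH-\bj)}_\cF,\jump{(\bv,\varphi,m)}_\cF)_{\cF_h^{\Omega_C}}$ seen entirely through the smooth field — but that is the same object already present with opposite role, so those symmetrizing/flux terms reconstitute $L_h$'s face term and cancel, leaving only the genuine $\Gamma$ contribution involving $(\nabla\varphi+m\boldsymbol\rho)\times\n_\Gamma$ and the edge term $-(\sigma^{-1}(\curl\bH-\bj),\jump{\varphi\tg}_\cE)_{\cE_h}$ from $A_h^{\Omega_I}$ (with the matching one from $L_h$).

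In $\Omega_I$: integrating $\imath\omega\mu_0(\nabla\psi+k\boldsymbol\rho,\nabla_h\varphi+m\boldsymbol\rho)_{\cT_h^{\Omega_I}}$ by parts on each $K$, and using $\div(\nabla\psi+k\boldsymbol\rho)=0$ from \eqref{ModelProblem5}, kills the volume part and leaves $\imath\omega\mu_0\sum_K\int_{\partial K}((\nabla\psi+k\boldsymbol\rho)\cdot\n_K)\,\overline{\varphi}$; since $\mu_0(\nabla\psi+k\boldsymbol\rho)\cdot\n$ is single-valued on interior faces (by $\bH\in\mathbf{H}(\curl)$, $\div$-free, i.e.\ \eqref{ModelProblem3} on $\Gamma$ and the analogous interior statement), this becomes $\imath\omega\mu_0(\mean{\nabla_h\psi+k\boldsymbol\rho}_\cF,\jump{\varphi\n}_\cF)_{\cF_h^{\Omega_I}}$ — which cancels the explicit $-\imath\omega\mu_0(\mean{\nabla_h\psi+k\boldsymbol\rho}_\cF,\jump{\varphi\n}_\cF)$ flux term — plus a surface integral over $\cF_h^\Sigma$ that vanishes since $\varphi$ need not vanish there but... actually the $\Sigma$ term is handled because on $\Sigma$ the test functions in $V_h$ are genuinely traced and the remaining $\Sigma$ integral $\imath\omega\mu_0\int_\Sigma((\nabla\psi+k\boldsymbol\rho)\cdot\n_\Sigma)\overline\varphi$ gets matched by the $\cF_h^\Sigma$ piece of the flux term, so I should instead keep $\Sigma$-faces grouped with the flux term from the start. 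What is left over from $\Omega_I$, beyond cancellations, is the interface edge term and, crucially, nothing else once the scalar global condition \eqref{ModelProblem4} is invoked: the leftover $\Gamma$-face term $-\int_\Gamma\sigma^{-1}(\curl\bH-\bj)\cdot((\nabla\varphi+m\boldsymbol\rho)\times\n_\Gamma)$ from the $\Omega_C$ analysis, after using $\div_\Gamma$ integration by parts on $\Gamma$ to move it to edges of $\cF_h^\Gamma$ — giving exactly the $\cE_h$ edge terms $-(\mean{\sigma^{-1}(\curl\bH-\bj)}_\cE,\jump{\varphi\tg}_\cE)_{\cE_h}$ which then cancel with the corresponding $A_h^{\Omega_I}$/$L_h$ edge terms — and the $m$-proportional part of that $\Gamma$ integral is precisely $-m\int_\Gamma\sigma^{-1}(\curl\bH-\bj)\cdot(\boldsymbol\rho\times\n_\Gamma)$, which \eqref{ModelProblem4} identifies with $-\imath\omega\mu_0 m\int_{\Omega_I}(\nabla\psi+k\boldsymbol\rho)\cdot\boldsymbol\rho$, i.e.\ the $c=k$ instance of $\imath\omega\mu_0(\nabla_h\psi+k\boldsymbol\rho,m\boldsymbol\rho)_{\cT_h^{\Omega_I}}$ — completing the match.

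The main obstacle is precisely this last bookkeeping on $\Gamma$ and $\cE_h$: one must integrate by parts on the polyhedral surface $\Gamma$ using $\int_\Gamma (\mathbf w\times\n_\Gamma)\cdot\nabla_\Gamma\varphi = -\int_\Gamma \div_\Gamma(\mathbf w\times\n_\Gamma)\,\varphi + \sum_{e\in\cE_h}\int_e \varphi\,[\![\mathbf w\times\n_\Gamma\cdot\boldsymbol\nu]\!]$, track the sign conventions for $\tg_e,\tg'_e$ and $\boldsymbol\nu_T,\boldsymbol\nu_{T'}$ so that the surface-divergence term reproduces (via $\div_\Gamma(\mathbf w\times\n_\Gamma)=\curl\mathbf w\cdot\n_\Gamma$ and $\curl\boldsymbol\rho=\boldsymbol 0$) the right pieces, and — for the $m$-term — separate the $\boldsymbol\rho\times\n_\Gamma$ contribution and invoke \eqref{ModelProblem4}. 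I expect the element-interior integrations by parts and the interior-face cancellations to be routine once the jump/average definitions \eqref{average1}–\eqref{jump2} are unwound carefully; everything on $\Gamma$, the matching of the scalar equation \eqref{ModelProblem4}, and the correct handling of $\cF_h^\Sigma$ (where $\psi=0$ but test functions do not vanish) are where attention is required.
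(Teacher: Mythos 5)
Your overall route is the same as the paper's (the jumps of the exact solution vanish by \eqref{ModelProblem2}, \eqref{ModelProblem6} and the regularity of $\bH$, $\psi$; elementwise integration by parts in $\Omega_C$ with \eqref{ModelProblem1} and in $\Omega_I$ with \eqref{ModelProblem5}, \eqref{ModelProblem4}; then an integration by parts on the surface $\Gamma$), but the decisive step on $\Gamma$ is missing, and your ``completing the match'' claim is premature. Two nonzero surface terms survive the cancellations you list. First, when you integrate by parts over the elements of $\cT_h^{\Omega_I}$, the boundary sum $\sum_K\int_{\partial K}(\nabla\psi+k\boldsymbol\rho)\cdot\n_K\,\varphi$ also contains the faces lying on $\Gamma$, which belong to $\cF_h^\Gamma$ and \emph{not} to $\cF_h^{\Omega_I}$; they are therefore not absorbed by the flux term of $A_h^{\Omega_I}$ and leave $-\imath\omega\mu_0\int_\Gamma(\nabla\psi+k\boldsymbol\rho)\cdot\n_\Gamma\,\varphi$ --- this term is absent from your bookkeeping. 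Second, when you integrate by parts on $\Gamma$ the contribution $\int_\Gamma\bE\cdot\bigl((\nabla_h\varphi)\times\n_\Gamma\bigr)$, with $\bE=\sigma^{-1}(\curl\,\bH-\bj)$, you do not only obtain the edge terms that cancel the $\cE_h$ contributions of $A_h^{\Omega_I}$ and $L_h$: the surface term $\int_\Gamma(\mathrm{curl}_\Gamma\,\bE)\,\varphi=\int_\Gamma(\curl\,\bE\cdot\n_\Gamma)\,\varphi$ remains and is not zero. Your statement that this $\Gamma$-face term ``moves to edges, giving exactly the $\cE_h$ edge terms'' contradicts your own later remark that a surface-divergence term appears, and leaves the argument open at exactly this point.

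These two leftovers cancel only through \eqref{ModelProblem1} combined with the normal transmission condition \eqref{ModelProblem3}: $\curl\,\bE\cdot\n_\Gamma=-\imath\omega\mu\bH\cdot\n_\Gamma=-\imath\omega\mu_0(\nabla\psi+k\boldsymbol\rho)\cdot\n_\Gamma$, which is precisely how the paper concludes. In your proposal \eqref{ModelProblem3} is never invoked for this purpose; it appears only, misplaced, in connection with interior faces of $\Omega_I$, where single-valuedness of the normal component already follows from $\nabla\psi+k\boldsymbol\rho$ being divergence-free in the whole of $\Omega_I$ by \eqref{ModelProblem5}. Since this use of \eqref{ModelProblem3} is the one ingredient that actually makes the scheme consistent --- the rest of your cancellations (the jump terms, the $\Sigma$ faces, the $m$-part via \eqref{ModelProblem4}, the edge terms) are correct and agree with the paper's proof --- the omission is a genuine gap rather than a routine detail: as written, the computation stops with two uncancelled integrals over $\Gamma$.
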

\begin{proof}
Using again the notation $\bE  = \sigma^{-1} ( \curl\, \bH - \bj )$ and taking into account that 
$\jump{(\bH,\psi,k)}_{\cF} =0$, $\jump{\psi\n}_{\cF}=0$, and $\jump{\psi\tg}_{\cE}=0$, it is straightforward to show that 
\begin{multline}\label{diff}
A_h((\bH, \psi, k), (\bv, \varphi, m)) - L_h((\bv, \varphi, m)) = \imath \omega \int_{\Omega_{C}} \mu \bH\cdot \bv + \int_{\Omega_{C}} \bE \cdot \curl_h \bv\\
+ \imath \omega \mu_0 \int_{\Omega_I}(\nabla \psi + k\boldsymbol \rho)\cdot (\nabla_h \varphi + m \boldsymbol \rho)
+ \left(\mean{\bE}_{\cF}, \jump{(\bv, \varphi, m)}_{\cF}\right)_{\cF_h^{\Omega_{C}}}\\ - \imath \omega \mu_0\left(\mean{\nabla \psi + k \boldsymbol \rho}_{\cF}, \jump{\varphi\n}_{\cF}\right)_{\cF_h^{\Omega_I}}
- \left(\mean{\bE}_{\cE}, \jump{\varphi\tg}_{\cE}\right)_{\cE_h}.
\end{multline}
Integrating by parts  in each $K\in \cT_h^{\Omega_{C}}$ and using \eqref{ModelProblem1} yield 
\begin{multline}\label{GreenOmegaC}
\int_{\Omega_{C}} \bE \cdot \curl_h \bv = \sum_{K\in \cT_h^{\Omega_{C}}} 
\int_K \curl\, \bE \cdot \bv -
\sum_{K\in \cT_h^{\Omega_{C}}} \int_{\partial K} \bE \cdot \bv\times \n_K \\
= -\imath\omega \int_{\Omega_{C}} \mu\bH\cdot \bv - \sum_{F\in \cF^0_h(\Omega_{C})}
\int_{F} \mean{\bE}_F \cdot \jump{\bv\times \n}_F -
\sum_{T\in \cF_h^\Gamma}
\int_{T} \bE \cdot \bv\times \n.
\end{multline}
Similarly, integrating by parts in each $K\in \cT_h^{\Omega_I}$ together with 
\eqref{ModelProblem4} and \eqref{ModelProblem5} give 
\begin{multline}\label{GreenOmegaI}
\imath \omega \mu_{0}\int_{\Omega_I}(\nabla \psi + k\boldsymbol \rho)\cdot (\nabla_h \varphi + m \boldsymbol \rho) = - \imath \omega \mu_{0}\sum_{K\in \cT_h^{\Omega_I}} \int_K \div (\nabla \psi + k \boldsymbol \rho) \varphi \\+\imath \omega \mu_{0}
\sum_{K\in \cT_h^{\Omega_I}} \int_{\partial K} (\nabla \psi + k\boldsymbol \rho) \cdot  \n_K \varphi
+ m \int_{\Omega_{I}} (\nabla \psi + k\boldsymbol \rho)\cdot \boldsymbol \rho
= \imath \omega \mu_{0} \sum_{F\in \cF_{h}^0} \int_{F} \mean{\nabla \psi + k \boldsymbol \rho}_{F}
\cdot \jump{\varphi\n}_{F}\\
- \imath \omega \mu_{0} \sum_{T\in \cF^{\Gamma}_{h}}\int_{F} (\nabla \psi + k \boldsymbol \rho)
\cdot \varphi\n_{\Gamma} + \imath \omega \mu_{0} \sum_{T\in \cF^{\Sigma}_{h}}\int_{F} (\nabla \psi + k \boldsymbol \rho) \cdot \varphi\n_{\Sigma} + m \int_{\Gamma} \bE \cdot (\boldsymbol \rho \times \n_{\Gamma}).
\end{multline}
Substituting back \eqref{GreenOmegaC} and \eqref{GreenOmegaI} in \eqref{diff} we obtain  
\begin{multline}\label{diff1}
A_h((\bH, \psi, k), (\bv, \varphi, m)) - L_h((\bv, \varphi, m)) = -
\sum_{T\in \cF_h^\Gamma} \int_T \bE \cdot \curl_T \varphi\\ 
- \imath \omega \mu_0 \sum_{T\in \cF_h^\Gamma} \int_T \nabla (\psi+ k \boldsymbol \rho) \cdot \varphi \n_{\Gamma} - \left(\mean{\bE}_{\cE}, \jump{\varphi\tg}_{\cE}\right)_{\cE_h}.
\end{multline}
Finally, using the integration by parts formula 
\begin{equation*}
\sum_{T\in \cF_h^\Gamma}
\int_{T} \bE \cdot \curl_T \varphi =\sum_{T\in \cF_h^\Gamma}
\int_{T} (\text{curl}_T \bE)  \varphi - \sum_{T\in \cF_h^\Gamma} \int_{\partial T} \bE \cdot \varphi\tg_{\partial T}
=
\int_{\Gamma} (\text{curl}_\Gamma \bE)  \varphi -
\left(\mean{\bE}_{\cE}, \jump{\varphi\tg}_{\cE}\right)_{\cE_h}, 
\end{equation*}
we deduce from \eqref{diff1} that 
\begin{multline*}\label{diff1+}
A_h((\bH, \psi, k), (\bv, \varphi, m)) - L_h((\bv, \varphi, m)) = -
\int_{\Gamma} (\text{curl}_\Gamma \bE)  \varphi\\ 
- \imath \omega \mu_0 \sum_{T\in \cF_h^\Gamma} \int_T \nabla (\psi+ k \boldsymbol \rho) \cdot \varphi \n_{\Gamma}.
\end{multline*}
and the result follows from the identity $\text{curl}_\Gamma \bE = \curl \bE \cdot \n$, equation \eqref{ModelProblem1} and the transmission condition \eqref{ModelProblem3}.
\end{proof}

\section{Convergence analysis of the DG-FEM formulation}\label{section4}

The aim of this Section is to prove that the DG-FEM formulation \eqref{DG-FEM} is stable in the 
DG-norm defined on $\mathbf{X}^s(\cT_h^{\Omega_C})\times \rmH^{1+s}(\cT_h^{\Omega_I})\times \C$ by \begin{align*}
\norm{(\bv, \varphi,m)}^2 := & \norm{(\omega\mu)^{1/2}\bv}^2_{0,\Omega_C} + \norm{\sigma^{-1/2}\curl_h \bv}^2_{0,\Omega_C} + \omega\mu_0 \norm{\nabla_h \varphi+ m \boldsymbol \rho}^2_{0,\Omega_I} \\
+ & \norm{\mathtt{s}^{-1/2}_{\cF}h_{\cF}^{-1/2}\jump{(\bv,\varphi,m) }_{\cF}}^2_{0,\cF_h^{\Omega_C}} + \omega\mu_0 \norm{h_{\cF}^{-1/2}\jump{\varphi\n}_{\cF}}^2_{0,\cF_h^{\Omega_I}}\\
+ &\norm{\mathtt{s}^{-1/2}_{\cE} h_{\cE}^{-1}\jump{\varphi\tg}_{\cE}}^2_{0,\cE_h}.
\end{align*}
We also need to introduce 
\begin{multline*}
\norm{(\bv, \varphi,m)}_{\ast}^2 :=  \norm{(\bv, \varphi,m)}^2 + 
\norm{\mathtt{s}_{\cF}^{1/2} h_{\cF}^{1/2} \mean{\sigma^{-1}\curl_h \bv}_{\cF}}^2_{0,\cF_h^{\Omega_C}}\\
+ \norm{\mathtt{s}_{\cE}^{1/2} h_{\cE} \mean{\sigma^{-1} \curl_h \bv }_{\cE}}^2_{0,\cE_h}
+\norm{h_{\cF}^{1/2} \mean{\nabla_h \varphi + m \boldsymbol \rho}_{\cF}}^2_{0,\cF_h^{\Omega_I}}.
\end{multline*}

The following discrete trace inequality is  standard, (see, e.g. \cite[Lemma 1.46]{DiPietroErn}).
\begin{lemma}
 For all integer $k\geq 0$ there exists a constant $C^*>0$ independent of $h$ such that,
 \begin{equation}\label{discreteTrace3D} 
  h_Q \norm{ v}^2_{0,\partial Q} \leq C^* \norm{ v}^2_{0,Q} \quad \forall \, v\in \cP_k(Q),\quad 
  \forall Q\in \{\cT_h,\cF_h^\Gamma\}.
 \end{equation}
\end{lemma}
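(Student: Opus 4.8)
The plan is to establish \eqref{discreteTrace3D} by the classical scaling argument to a fixed reference element, so I will treat the two cases $Q\in\cT_h$ and $Q\in\cF_h^\Gamma$ simultaneously by writing $d=3$ in the first case (reference tetrahedron $\hat Q$) and $d=2$ in the second (reference triangle $\hat Q$). The first step is the inequality on $\hat Q$ itself: since $\cP_k(\hat Q)$ is finite dimensional and the map $\hat v\mapsto\norm{\hat v}_{0,\partial\hat Q}$ is a seminorm that is finite on polynomials (they are smooth up to the boundary), compactness of the unit sphere of $(\cP_k(\hat Q),\norm{\cdot}_{0,\hat Q})$ yields a constant $\hat C=\hat C(k)>0$ with
\[
\norm{\hat v}_{0,\partial\hat Q}^2\le \hat C\,\norm{\hat v}_{0,\hat Q}^2\qquad\forall\,\hat v\in\cP_k(\hat Q).
\]

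The second step transports this bound to a general $Q$ through the affine bijection $F_Q\colon\hat Q\to Q$, $F_Q(\hat\x)=B_Q\hat\x+b_Q$. For $v\in\cP_k(Q)$ put $\hat v:=v\circ F_Q\in\cP_k(\hat Q)$. The change of variables gives $\norm{v}_{0,Q}^2=|\det B_Q|\,\norm{\hat v}_{0,\hat Q}^2$, while on each facet of $\partial Q$ the induced surface measure introduces a Jacobian of the form $|\det B_Q|\,\norm{B_Q^{-\top}\widehat{\n}}$, whence $\norm{v}_{0,\partial Q}^2\le |\det B_Q|\,\norm{B_Q^{-1}}\,\norm{\hat v}_{0,\partial\hat Q}^2$. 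Shape-regularity of $\{\cT_h\}_h$ — and, in the second case, of the family $\{\cF_h^\Gamma\}_h$ of triangulations of $\Gamma$, which was recorded above — provides constants, uniform in $h$ and in $Q$, with $|\det B_Q|\sim h_Q^{\,d}$ and $\norm{B_Q^{-1}}\lesssim h_Q^{-1}$. Combining these with the reference inequality,
\[
\norm{v}_{0,\partial Q}^2\lesssim h_Q^{\,d}\,h_Q^{-1}\,\norm{\hat v}_{0,\partial\hat Q}^2\le \hat C\,h_Q^{\,d-1}\,\norm{\hat v}_{0,\hat Q}^2\sim h_Q^{-1}\,\norm{v}_{0,Q}^2 ,
\]
and multiplying through by $h_Q$ yields \eqref{discreteTrace3D} with a constant $C^*$ depending only on $k$ and the shape-regularity parameter.

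There is no genuine obstacle here — the statement is standard and could equally be quoted from \cite[Lemma 1.46]{DiPietroErn}. The only point requiring care is that all the geometric constants relating $\norm{B_Q}$, $\norm{B_Q^{-1}}$ and $\det B_Q$ to $h_Q$ be uniform over the whole sequence of meshes, which is precisely the content of the shape-regularity assumption, and that in the case $Q\in\cF_h^\Gamma$ one uses the two-dimensional version ($d=2$) together with the already-noted shape-regularity of $\{\cF_h^\Gamma\}_h$.
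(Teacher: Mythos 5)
Your scaling argument is correct and is precisely the standard proof of this discrete trace inequality; the paper itself offers no proof, simply citing \cite[Lemma 1.46]{DiPietroErn}, which is the same result you reproduce (and which you also note could be quoted directly). The only points needing care --- uniformity of the constants via shape-regularity of $\{\cT_h\}_h$ and of the induced family $\{\cF_h^\Gamma\}_h$ on $\Gamma$ --- are exactly the ones you address, so nothing is missing.
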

It is used to prove the following auxiliary result. 
\begin{lemma} \label{equivalence}
For all $k\geq 0$, there exist constants $C_{\Omega_C}>0$ and $C_{\Omega_I}>0$  independent 
of the mesh size and the coefficients such that 
\begin{equation}\label{discIneq1}
\norm{\mathtt{s}_{\cE}^{1/2} h_{\cE} \mean{\sigma^{-1} \mathbf{w}}_{\cE}} _{0,\cE_h} +
\norm{\mathtt{s}_{\cF}^{1/2} h_{\cF}^{1/2} \mean{\sigma^{-1}\mathbf{w}}_{\cF}}_{0,\cF_h^{\Omega_C}} \leq C_{\Omega_C} \norm{\sigma^{-1/2} \mathbf{w}}_{0,{\Omega_C}}\, ,
\end{equation}
 for all $\mathbf{w} \in \prod_{K\in \cT_h^{\Omega_C}}\cP_k(K)^3$,
and
\begin{equation}\label{discIneq2}
\norm{h_{\cF}^{1/2} \mean{\mathbf{w}}_{\cF}}_{0,\cF_h^{\Omega_I}} \leq C_{\Omega_I} \norm{\mathbf{w}}_{0,\Omega_I} \, ,
\end{equation}
for all $\mathbf{w} \in \prod_{K\in \cT_h^{\Omega_I}}\cP_k(K)^3$.
\end{lemma}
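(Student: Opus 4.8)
The plan is to prove both \eqref{discIneq1} and \eqref{discIneq2} by reducing each norm over faces (or edges) to a sum of contributions over the elements of $\cT_h^{\Omega_C}$ (resp. $\cT_h^{\Omega_I}$), using the discrete trace inequality \eqref{discreteTrace3D}. The essential observation is that each face $F\in\cF_h^{\Omega_C}$ is the intersection of (at most) two tetrahedra $K,K'$, and each edge $e\in\cE_h$ lies on (at most) two triangular faces $T,T'\in\cF_h^\Gamma$, which are themselves carried by two tetrahedra $K_e,K_e'\in\cT_h^{\Omega_C}$; by shape-regularity of $\{\cT_h\}_h$ the number of faces of a given element, and the number of edges in $\cF_h^\Gamma$ touching a given element, are bounded by a fixed constant, so that summing the local estimates produces only a combinatorial constant.

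For \eqref{discIneq1}, I would start with the face term. On an interior face $F=K\cap K'\in\cF_h^0(\Omega_C)$ one has $\mean{\sigma^{-1}\mathbf{w}}_F=(\sigma_K^{-1}\mathbf{w}_K+\sigma_{K'}^{-1}\mathbf{w}_{K'})/2$ and $\mathtt{s}_F=\min(\sigma_K,\sigma_{K'})$, so $\mathtt{s}_F^{1/2}\sigma_K^{-1}\le \sigma_K^{-1/2}$ and likewise for $K'$; hence by the triangle inequality and \eqref{discreteTrace3D} applied on $\partial K\supset F$,
\begin{equation*}
\mathtt{s}_F\, h_F\norm{\mean{\sigma^{-1}\mathbf{w}}_F}^2_{0,F}
\lesssim h_K\bigl(\sigma_K^{-1}\norm{\mathbf{w}_K}^2_{0,F}+\sigma_{K'}^{-1}\norm{\mathbf{w}_{K'}}^2_{0,F}\bigr)
\le C^*\bigl(\sigma_K^{-1}\norm{\mathbf{w}_K}^2_{0,K}+\sigma_{K'}^{-1}\norm{\mathbf{w}_{K'}}^2_{0,K'}\bigr),
\end{equation*}
where I also used that $h_F\le h_K$ and $h_F\le h_{K'}$. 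A boundary face $F\in\cF_h^\Gamma$ is handled the same way with only one term present. Summing over all $F\in\cF_h^{\Omega_C}$ and using that each $K$ has at most four faces gives the bound on the second summand of the left-hand side of \eqref{discIneq1} by $C_{\Omega_C}\norm{\sigma^{-1/2}\mathbf{w}}^2_{0,\Omega_C}$. The edge term is analogous: for $e=T\cap T'$ with $T=\partial K_e\cap\Gamma$, $T'=\partial K_e'\cap\Gamma$, I apply \eqref{discreteTrace3D} twice — first to pass from $e\subset\partial T$ to $T$ with the factor $h_e$ (using $Q=T\in\cF_h^\Gamma$ and $h_e\le h_T$), then from $T=\partial K_e\cap\Gamma$ to $K_e$ with a factor $h_T\le h_{K_e}$ — so that $h_e^2\norm{\mathbf{w}_{K_e}}^2_{0,e}\lesssim h_T\norm{\mathbf{w}_{K_e}}^2_{0,T}\lesssim \norm{\mathbf{w}_{K_e}}^2_{0,K_e}$; combined with $\mathtt{s}_e^{1/2}\sigma_{K_e}^{-1}\le\sigma_{K_e}^{-1/2}$ this yields the edge bound, and summing over $e\in\cE_h$ (finitely many per element) completes \eqref{discIneq1}. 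Estimate \eqref{discIneq2} is the special case $\sigma\equiv\mu_0$ constant with no edge term: for $F=K\cap K'\in\cF_h^0(\Omega_I)$ or $F\in\cF_h^\Sigma$, $h_F\norm{\mean{\mathbf{w}}_F}^2_{0,F}\lesssim h_K\norm{\mathbf{w}_K}^2_{0,F}+h_{K'}\norm{\mathbf{w}_{K'}}^2_{0,F}\le C^*(\norm{\mathbf{w}_K}^2_{0,K}+\norm{\mathbf{w}_{K'}}^2_{0,K'})$, and summing gives \eqref{discIneq2}.

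The only delicate point is the edge estimate in \eqref{discIneq1}, where one must chain the trace inequality through two levels of geometry (edge $\hookrightarrow$ surface triangle $\hookrightarrow$ tetrahedron) and keep track of the powers of $h$: the first application contributes $h_e\,\|\cdot\|_{0,e}^2\le C^*\|\cdot\|_{0,T}^2$ with $Q=T\in\cF_h^\Gamma$, and the second $h_T\,\|\cdot\|_{0,T}^2\le C^*\|\cdot\|_{0,K}^2$, so that the prefactor $h_e^2=h_e\cdot h_e\le h_e\cdot h_T$ is exactly absorbed; here shape-regularity of the family $\{\cF_h^\Gamma\}_h$ (guaranteeing $h_e\simeq h_T\simeq h_{K_e}$ up to fixed constants, or at least $h_e\le h_T\le h_{K_e}$) is what makes the two factors match. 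Everything else is a routine combination of the triangle inequality, the comparison $\mathtt{s}_\cF^{1/2}\sigma_K^{-1}\le\sigma_K^{-1/2}$ (and its edge analogue), $h_F\le\min(h_K,h_{K'})$, and the uniformly bounded number of faces/edges incident to each element, so the constants $C_{\Omega_C},C_{\Omega_I}$ depend only on $C^*$ and the shape-regularity parameter and are independent of $h$, $\sigma$ and $\mu$.
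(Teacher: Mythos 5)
Your proposal is correct and follows essentially the same route as the paper: both arguments use the comparison $\mathtt{s}_F^{1/2}\sigma_K^{-1}\le\sigma_K^{-1/2}$ (and its edge analogue), regroup the face and edge sums element by element, and invoke the discrete trace inequality \eqref{discreteTrace3D} — once for the face terms and twice in a chained fashion (edge $\to$ boundary triangle $T\in\cF_h^\Gamma$ $\to$ tetrahedron) for the edge term, exactly as in the paper's estimates \eqref{transfer0}--\eqref{transfer1}. The only difference is presentational: you apply the trace inequality face-by-face with the triangle inequality on the averages, while the paper sums over $\partial K$ and $\partial T$ directly; the constants in both cases depend only on $C^*$ and shape regularity.
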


\begin{proof}
By definition of $\mathtt{s}_{\cF}$, for any $\mathbf{w} \in \prod_{K\in \cT_h^{\Omega_C}}\cP_k(K)^3$,
\begin{multline}\label{transfer0}
\norm{\mathtt{s}_{\cF}^{1/2} h_{\cF}^{1/2} \mean{\sigma^{-1}\mathbf{w}}_{\cF}}^2_{0,\cF_h^{\Omega_C}} =  
\sum_{F\in \cF_h^{\Omega_C}} h_F \norm{ \mathtt{s}_F^{1/2}\mean{\sigma^{-1}\mathbf{w}}_F }^2_{0,F}\\ 
\leq \sum_{K\in \cT_h^{\Omega_C}} \sum_{F\in \cF(K)} h_F \norm{ \mathtt{s}_F^{1/2}\sigma_K^{-1}\mathbf{w}_K }^2_{0,F}
\leq  \sum_{K\in \cT_h^{\Omega_C}}  h_K 
\norm{ \sigma_K^{-1/2}\mathbf{w}_K }^2_{0,\partial K}.
\end{multline}

Similarly, 
\begin{multline}\label{transfer}
\norm{ \mathtt{s}_{\cE}^{1/2} h_{\cE} \mean{\sigma^{-1} \mathbf{w}}_{\cE} }^2 _{0,\cE_h} =  
\sum_{e \in \cE_h} h_e^2 \norm{\mathtt{s}_{e}^{1/2} \mean{\sigma^{-1} \mathbf{w}}_e }^2_{0,e} \\
\leq \sum_{T\in \cF_h^\Gamma} \sum_{e\in \cE(T)}  h_e^2 \norm{\mathtt{s}_{e}^{1/2} \sigma_{K_T}^{-1} 
\mathbf{w}_{K_T}}^2_{0,e}
\leq \sum_{T\in \cF_h^\Gamma }
  h_{T}^2 \norm{\sigma_{K_T}^{-1/2} \mathbf{w}_{K_T}}^2_{0,\partial T}\, ,
\end{multline} 
where $K_T\in \cT_h^{\Omega_C}$ is such that $T=\partial K_T \cap \Gamma$.
It follows from \eqref{discreteTrace3D} that 
\begin{multline*}
\norm{\mathtt{s}_{\cE}^{1/2} h_{\cE} \mean{\sigma^{-1} \mathbf{w}}_{\cE}}^2 _{0,\cE_h^{\Omega_I}} \leq C^*
\sum_{T \in \cF_h^\Gamma}  h_{T} \norm{\sigma_{K_T}^{-1/2} \mathbf{w}_{K_T}}^2_{0, T} \leq C^*
\sum_{K \in\cT^{\Omega_C}_h}  h_{K} \norm{\sigma_K^{-1/2} \mathbf{w}_K}^2_{0, \partial K} 
\end{multline*} 
and \eqref{discIneq1} follows by applying again the discrete trace inequality \eqref{discreteTrace3D} in the last estimate and in \eqref{transfer0}. 
Finally, for any $\mathbf{w} \in \prod_{K\in \cT_h^{\Omega_I}}\cP_k(K)^3$,
\begin{equation}\label{transfer1}
\norm{h_{\cF}^{1/2} \mean{\mathbf{w}}_{\cF}}^2_{0,\cF_h^{\Omega_I}} = \sum_{F\in \cF_h^{\Omega_I}} h_F \norm{\mean{\mathbf{w}}_F}_{0,F}^2 
\leq  \sum_{K\in \cT_h^{\Omega_I}} h_K \norm{ \mathbf{w}_K }^2_{0,\partial K} 
\end{equation}
and \eqref{discIneq2} follows again from \eqref{discreteTrace3D}.
\end{proof}

\begin{prop}\label{boundedness}
There exists a constant $M>0$ independent of $h$ such that 
\[
| A_h((\bu, \phi, c), (\bv, \varphi, m)) | \leq M \norm{(\bu, \phi, c)}_* \norm{(\bv,\varphi, m)}
\]
for all $(\bu, \phi,c)$, $(\bv,\varphi,m)\in \mathbf{X}^s(\cT_h^{\Omega_C})\times \rmH^{1+s}(\cT_h^{\Omega_I})\times \C$, with $s>1/2$.
\end{prop}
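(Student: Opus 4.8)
The proof is a term-by-term estimation. Writing $A_h=A_h^{\Omega_C}+A_h^{\Omega_I}$ and applying the triangle inequality, it suffices to bound each of the individual terms by $M'\norm{(\bu,\phi,c)}_\ast\norm{(\bv,\varphi,m)}$ for a suitable constant, and then to take for $M$ the sum of these constants. Each term is estimated by the Cauchy--Schwarz inequality: over $\cT_h^{\Omega_C}$ or $\cT_h^{\Omega_I}$ for the volume contributions, over $\cF_h^{\Omega_C}$ or $\cF_h^{\Omega_I}$ for the face contributions, and over $\cE_h$ for the edge contributions. The key device is, on every face $F$ and edge $e$, to split the penalty weights as $\mathtt s_\cF^{-1}h_\cF^{-1}=(\mathtt s_\cF^{1/2}h_\cF^{1/2})(\mathtt s_\cF^{-1/2}h_\cF^{-1/2})$, $\mathtt s_\cE^{-1}h_\cE^{-2}=(\mathtt s_\cE^{1/2}h_\cE)(\mathtt s_\cE^{-1/2}h_\cE^{-1})$, $h_\cF^{-1}=h_\cF^{1/2}\,h_\cF^{-1/2}$, and to distribute the factors $\omega\mu_0$ evenly, pairing each average of a curl/gradient with $h_\cF^{1/2}$ (resp.\ $h_\cE$) and each jump with $h_\cF^{-1/2}$ (resp.\ $h_\cE^{-1}$), so that one of the two Cauchy--Schwarz factors is exactly a seminorm term of $\norm{(\bu,\phi,c)}_\ast$ and the other exactly one of $\norm{(\bv,\varphi,m)}$.

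Concretely: the two volume terms of $A_h^{\Omega_C}$ and the first volume term of $A_h^{\Omega_I}$ are controlled at once by the quantities $\norm{(\omega\mu)^{1/2}\bv}_{0,\Omega_C}$, $\norm{\sigma^{-1/2}\curl_h\bv}_{0,\Omega_C}$, $(\omega\mu_0)^{1/2}\norm{\nabla_h\varphi+m\boldsymbol\rho}_{0,\Omega_I}$ and their counterparts with $(\bu,\phi,c)$, all of which appear in both norms; the three pure-penalty terms (those carrying $\mathtt a^{\Omega_C}$, $\mathtt a^{\Omega_I}$ and $\alpha$) are controlled directly by the jump contributions $\norm{\mathtt s_\cF^{-1/2}h_\cF^{-1/2}\jump{(\bu,\phi,c)}_\cF}_{0,\cF_h^{\Omega_C}}$, $(\omega\mu_0)^{1/2}\norm{h_\cF^{-1/2}\jump{\phi\n}_\cF}_{0,\cF_h^{\Omega_I}}$, $\norm{\mathtt s_\cE^{-1/2}h_\cE^{-1}\jump{\phi\tg}_\cE}_{0,\cE_h}$ and their counterparts with the second argument, again common to both norms. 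For the remaining consistency/symmetrisation terms one distinguishes two cases. When the average is of $\curl_h\bu$ or of $\nabla_h\phi+c\boldsymbol\rho$ (the first argument), the corresponding Cauchy--Schwarz factor is exactly $\norm{\mathtt s_\cF^{1/2}h_\cF^{1/2}\mean{\sigma^{-1}\curl_h\bu}_\cF}_{0,\cF_h^{\Omega_C}}$, $\norm{\mathtt s_\cE^{1/2}h_\cE\mean{\sigma^{-1}\curl_h\bu}_\cE}_{0,\cE_h}$ or $\norm{h_\cF^{1/2}\mean{\nabla_h\phi+c\boldsymbol\rho}_\cF}_{0,\cF_h^{\Omega_I}}$, i.e.\ one of the additional terms defining $\norm{(\bu,\phi,c)}_\ast$, while the companion jump factor lies in $\norm{(\bv,\varphi,m)}$. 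When instead the average is of $\curl_h\bv$ or $\nabla_h\varphi+m\boldsymbol\rho$ (the second argument), the companion jump factor lies in $\norm{(\bu,\phi,c)}\le\norm{(\bu,\phi,c)}_\ast$, and the average factor is brought back to a volume norm by Lemma~\ref{equivalence}: it is bounded by $C_{\Omega_C}\norm{\sigma^{-1/2}\curl_h\bv}_{0,\Omega_C}$, respectively $C_{\Omega_I}\norm{\nabla_h\varphi+m\boldsymbol\rho}_{0,\Omega_I}$, both of which are controlled by $\norm{(\bv,\varphi,m)}$. Collecting all the estimates yields the bound with $M$ depending only on $\omega$, $\mu_0$, the penalty parameters $\mathtt a^{\Omega_C},\mathtt a^{\Omega_I},\alpha$ and the constants $C^\ast, C_{\Omega_C}, C_{\Omega_I}$, hence independent of $h$.

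The computation is long but entirely routine; the only genuinely structural point is the asymmetry of the two norms. Lemma~\ref{equivalence} is an inverse-type inequality, available only on piecewise polynomials, so it can be applied only to the averages of $\curl_h$ and $\nabla_h$ of the \emph{second} argument; for the first argument no such estimate holds, which is precisely why the average terms $\norm{\mathtt s_\cF^{1/2}h_\cF^{1/2}\mean{\sigma^{-1}\curl_h\bu}_\cF}$, $\norm{\mathtt s_\cE^{1/2}h_\cE\mean{\sigma^{-1}\curl_h\bu}_\cE}$ and $\norm{h_\cF^{1/2}\mean{\nabla_h\phi+c\boldsymbol\rho}_\cF}$ must be carried along explicitly in $\norm{\cdot}_\ast$. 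Thus the one thing to watch while grinding through the bookkeeping is that $\norm{\cdot}_\ast$ is always assigned to the first slot and $\norm{\cdot}$, together with Lemma~\ref{equivalence}, to the second.
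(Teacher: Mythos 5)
Your argument is correct and coincides with the paper's own proof: Cauchy--Schwarz term by term with the weights split as $\mathtt{s}_\cF^{\pm 1/2}h_\cF^{\pm 1/2}$ (resp.\ $\mathtt{s}_\cE^{\pm 1/2}h_\cE^{\pm 1}$, $h_\cF^{\pm 1/2}$), keeping the average terms of the first argument inside $\norm{\cdot}_\ast$ and reducing the averages of the second argument to volume norms via Lemma~\ref{equivalence}, exactly as the paper does. The only caveat --- shared with the paper's proof --- is that Lemma~\ref{equivalence} is stated for piecewise polynomials while the Proposition admits a general second argument; this is harmless in practice since the estimate is only ever invoked with a discrete second argument (as in Theorem~\ref{LM}).
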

\begin{proof}
By the Cauchy-Schwarz inequality, we have that 
$$ 
\begin{array}{l}
|A_h^{\Omega_C}((\bu, \phi,c), (\bv, \varphi,m))| \\[.1cm]
\qquad \leq \omega \norm{\mu^{1/2}\bu}_{0,\Omega_C} \norm{\mu^{1/2}\bv}_{0,\Omega_C} + 
\norm{\sigma^{-1/2}\curl_h \bu}_{0,\Omega_C} \norm{\sigma^{-1/2}\curl_h \bv}_{0,\Omega_C}\\[.1cm] 
\qquad + \norm{\mathtt{s}_{\cF}^{1/2} h_{\cF}^{1/2} \mean{\sigma^{-1}\curl_h \bu}_{\cF}}_{0, \cF_h^{\Omega_C}}
 \norm{\mathtt{s}_{\cF}^{-1/2} h_{\cF}^{-1/2} \jump{(\bv, \varphi,m)}_{\cF}}_{0, \cF_h^{\Omega_C}}\\[.1cm]
\qquad + \norm{\mathtt{s}_{\cF}^{1/2} h_{\cF}^{1/2} \mean{\sigma^{-1}\curl_h \bv}_{\cF}}_{0, \cF_h^{\Omega_C}}
 \norm{\mathtt{s}_{\cF}^{-1/2} h_{\cF}^{-1/2} \jump{(\bu, \phi,c)}_{\cF}}_{0, \cF_h^{\Omega_C}}\\[.1cm]
 \qquad + \mathtt{a}^{\Omega_C} \norm{\mathtt{s}_{\cF}^{-1/2}h_{\cF}^{-1/2} \jump{(\bu, \phi,c)}_{\cF}}_{0,\cF_h^{\Omega_C}} \norm{\mathtt{s}_{\cF}^{-1/2}h_{\cF}^{-1/2} \jump{(\bv, \varphi,m)}_{\cF}}_{0,\cF_h^{\Omega_C}}.
\end{array}
$$
Applying  \eqref{discIneq1} with $\mathbf{w} = \curl_h \bv$ we obtain 
\[
|A_h^{\Omega_C}((\bu, \phi,c), (\bv, \varphi, m))| \leq (1+ C_\Omega + \mathtt{a}^{\Omega_C})\,  \norm{(\bu, \phi,c)}_* \norm{(\bv,\varphi,m)}
\]
for all $(\bu, \phi,c)$ and $(\bv,\varphi,m)\in \mathbf{X}^s(\cT_h^\Omega)\times \rmH^{1+s}(\cT_h^{\Omega_I}) \times \C $. 
On the other hand,
$$
\begin{array}{l}
|A_h^{\Omega_I}((\bu, \phi, c), (\bv, \varphi,m))| \leq \omega \mu_0 \norm{\nabla_h \phi+c \boldsymbol \rho}_{0,\Omega_I} \norm{\nabla_h \varphi + m \boldsymbol \rho}_{0,\Omega_I} \\[.1cm]
\quad\qquad+ \omega \mu_0 \norm{h_{\cF}^{1/2} \mean{\nabla_h \varphi+m \boldsymbol \rho}_{\cF}}_{0,\cF_h^{\Omega_I}} \norm{h_{\cF}^{-1/2} \jump{\phi\n}_{\cF}}_{0,\cF_h^{\Omega_I}} \\[.1cm]
\quad\qquad+ \omega \mu_0 \norm{h_{\cF}^{1/2} \mean{\nabla_h \phi+c \boldsymbol \rho}_{\cF}}_{0,\cF_h^{\Omega_I}} \norm{h_{\cF}^{-1/2} \jump{\varphi\n}_{\cF}}_{0,\cF_h^{\Omega_I}}\\[.1cm]
\quad\qquad+ \alpha \norm{\mathtt{s}_{\cF}^{-1/2}h_{\cE}^{-1} \jump{\phi\tg}_{\cE}}_{0,\cE_h} \norm{\mathtt{s}_{\cF}^{-1/2}h_{\cE}^{-1} \jump{\varphi\tg}_{\cE}}_{0,\cE_h}\\[.1cm]
\quad\qquad+  \norm{\mathtt{s}_{\cF}^{1/2}h_{\cE}\mean{\sigma^{-1} \curl_h \bv}_{\cE}}_{0,\cE_h} \norm{\mathtt{s}_{\cF}^{-1/2}h_{\cE}^{-1} \jump{\phi\tg}_{\cE}}_{0,\cE_h}\\[.1cm]
\quad\qquad+ \norm{\mathtt{s}_{\cF}^{1/2}h_{\cE}\mean{\sigma^{-1} \curl_h \bu}_{\cE}}_{0,\cE_h} \norm{\mathtt{s}_{\cF}^{-1/2}h_{\cE}^{-1} \jump{\varphi\tg}_{\cE}}_{0,\cE_h}\\[.1cm]
\quad\qquad+\mathtt{a}^{\Omega_I}\norm{h_{\cF}^{-1/2} \jump{\phi\n}_{\cF}}_{0,\cF_h^{\Omega_I}} \norm{h_{\cF}^{-1/2} \jump{\varphi\n}_{\cF}}_{0,\cF_h}
\end{array}
$$
and it follows from  \eqref{discIneq2} (applied with $\mathbf{w}= \nabla_h \varphi+ m \boldsymbol \rho$
) and  \eqref{discIneq1} (applied with $\mathbf{w}= \curl_h \bv$) that
\[
|A_h^{\Omega_I}((\bu, \phi,c), (\bv, \varphi, m))| \leq (1 + C_{\Omega_I} + C_{\Omega}+ \mathtt{a}^{\Omega_I}+ \alpha) \,  \norm{(\bu, \phi,c)}_* \norm{(\bv,\varphi,m)},
\]
which gives the result. 
\end{proof}

\begin{prop}\label{discElip} 
There exists a constant $\alpha_0>0$ independent of the mesh size and the coefficients such that if $\min(\mathtt{a}^{\Omega_C}, \mathtt{a}^{\Omega_I}, \alpha)\geq \alpha_0$
then, 
\begin{equation}\label{elip}
\text{Re}\left[ (1 - \imath) A_h((\bv, \varphi,m), (\overline \bv, \overline \varphi,\overline m))\right] \geq \dfrac{1}{2} \norm{(\bv, \varphi,m)}^2 \qquad \forall (\bv, \varphi,m)\in \mathbf{X}_h\times V_h\times \C.
\end{equation} 
\end{prop}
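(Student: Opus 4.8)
The plan is to test the form against the complex conjugate of its argument and exploit the coercivity structure exposed by the rotation $(1-\imath)$. First I would expand $\Lambda := A_h((\bv,\varphi,m),(\overline\bv,\overline\varphi,\overline m))$. Since $\mu$, $\sigma$, $\mu_0$ and $\boldsymbol\rho$ are real-valued and the pairings $(\cdot,\cdot)_{\mathcal O_h}$ are bilinear, each diagonal volume term $(\kappa\,\mathbf w,\overline{\mathbf w})_{\mathcal O_h}$ with $\kappa\geq 0$ evaluates to $\norm{\kappa^{1/2}\mathbf w}^2_{0}$: the term $(\sigma^{-1}\curl_h\cdot,\curl_h\cdot)$ produces $\norm{\sigma^{-1/2}\curl_h\bv}^2_{0,\Omega_C}$, the two $\imath\omega$-terms produce $\imath[\omega\norm{\mu^{1/2}\bv}^2_{0,\Omega_C}+\omega\mu_0\norm{\nabla_h\varphi+m\boldsymbol\rho}^2_{0,\Omega_I}]$, and the three penalty terms produce their own nonnegative squares with factors $\mathtt a^{\Omega_C}$, $\mathtt a^{\Omega_I}/(\omega\mu_0)$ and $\alpha$. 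Each symmetric pair of consistency/coupling contributions (on $\cF_h^{\Omega_C}$ against $\jump{(\cdot)}_\cF$, on $\cF_h^{\Omega_I}$ against $\jump{\cdot\,\n}_\cF$, and on $\cE_h$ against $\jump{\cdot\,\tg}_\cE$) collapses to twice the real part of a single pairing, the insulator face pair carrying the extra factor $-\imath\omega\mu_0$. Hence $\Lambda=\mathcal R+\imath\,\mathcal I$ with $\mathcal R,\mathcal I$ real, where $\mathcal I$ gathers $\omega\norm{\mu^{1/2}\bv}^2_{0,\Omega_C}+\omega\mu_0\norm{\nabla_h\varphi+m\boldsymbol\rho}^2_{0,\Omega_I}$ and the $\cF_h^{\Omega_I}$ cross term, while $\mathcal R$ gathers everything else. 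Because $\text{Re}[(1-\imath)(\mathcal R+\imath\mathcal I)]=\mathcal R+\mathcal I$, it suffices to prove $\mathcal R+\mathcal I\geq\tfrac12\norm{(\bv,\varphi,m)}^2$.

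Next I would control the three cross terms. By Cauchy--Schwarz on $\cF_h^{\Omega_C}$, $\cE_h$ and $\cF_h^{\Omega_I}$, each is bounded by the product of a jump seminorm appearing in $\norm{\cdot}$ with a weighted average, namely $\norm{\mathtt s_\cF^{1/2}h_\cF^{1/2}\mean{\sigma^{-1}\curl_h\bv}_\cF}_{0,\cF_h^{\Omega_C}}$, $\norm{\mathtt s_\cE^{1/2}h_\cE\mean{\sigma^{-1}\curl_h\bv}_\cE}_{0,\cE_h}$ and $\norm{h_\cF^{1/2}\mean{\nabla_h\varphi+m\boldsymbol\rho}_\cF}_{0,\cF_h^{\Omega_I}}$, the weights being arranged exactly so that these appear. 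This is the step that confines the estimate to $\mathbf X_h\times V_h\times\C$: on that space $\curl_h\bv$ and $\nabla_h\varphi+m\boldsymbol\rho$ are elementwise polynomial, so Lemma~\ref{equivalence} applies and bounds the first two averages by $C_{\Omega_C}\norm{\sigma^{-1/2}\curl_h\bv}_{0,\Omega_C}$ through \eqref{discIneq1} (with $\mathbf w=\curl_h\bv$) and the third by $C_{\Omega_I}\norm{\nabla_h\varphi+m\boldsymbol\rho}_{0,\Omega_I}$ through \eqref{discIneq2} (with $\mathbf w=\nabla_h\varphi+m\boldsymbol\rho$). A Young inequality with a small parameter then absorbs at most a $\tfrac12$-fraction of $\norm{\sigma^{-1/2}\curl_h\bv}^2_{0,\Omega_C}$ and of $\omega\mu_0\norm{\nabla_h\varphi+m\boldsymbol\rho}^2_{0,\Omega_I}$ (both present in $\mathcal R+\mathcal I$ with full weight) and deposits the remainders, with constants of order $C_{\Omega_C}^2$ and $C_{\Omega_I}^2$, onto the seminorms $\norm{\mathtt s_\cF^{-1/2}h_\cF^{-1/2}\jump{(\bv,\varphi,m)}_\cF}^2_{0,\cF_h^{\Omega_C}}$, $\norm{\mathtt s_\cE^{-1/2}h_\cE^{-1}\jump{\varphi\tg}_\cE}^2_{0,\cE_h}$ and $\omega\mu_0\norm{h_\cF^{-1/2}\jump{\varphi\n}_\cF}^2_{0,\cF_h^{\Omega_I}}$.

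Collecting everything, $\mathcal R+\mathcal I$ is bounded below by $\omega\norm{\mu^{1/2}\bv}^2_{0,\Omega_C}+\tfrac12\norm{\sigma^{-1/2}\curl_h\bv}^2_{0,\Omega_C}+\tfrac12\omega\mu_0\norm{\nabla_h\varphi+m\boldsymbol\rho}^2_{0,\Omega_I}$ plus the three jump seminorms above with reduced coefficients of the form $\mathtt a^{\Omega_C}-C$, $\mathtt a^{\Omega_I}/(\omega\mu_0)^2-C$ (after factoring the weight $\omega\mu_0$ out of the insulator face penalty) and $\alpha-C$, with $C$ depending only on $C_{\Omega_C}$, $C_{\Omega_I}$. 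Choosing $\alpha_0$ large enough that each of these reduced coefficients is at least $\tfrac12$ times the corresponding weight in $\norm{\cdot}$ then yields $\mathcal R+\mathcal I\geq\tfrac12\norm{(\bv,\varphi,m)}^2$, i.e.\ \eqref{elip}; the resulting $\alpha_0$ depends only on the constants $C_{\Omega_C}$, $C_{\Omega_I}$ of Lemma~\ref{equivalence} (and, through the insulator face penalty, on the fixed factor $\omega\mu_0$), not on the possibly discontinuous $\mu$ and $\sigma$. The only delicate point is the bookkeeping in the middle step: one must make sure the weighted average dual to \emph{every} jump seminorm entering $\norm{\cdot}$ is a genuinely discrete quantity covered by Lemma~\ref{equivalence}, and keep track of the $\omega\mu_0$-scaling of the insulator face penalty relative to $\norm{\cdot}$; once these are aligned the coercivity follows at once.
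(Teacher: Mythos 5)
Your proposal is correct and follows essentially the same route as the paper's proof: expand $\text{Re}\left[(1-\imath)A_h((\bv,\varphi,m),(\overline\bv,\overline\varphi,\overline m))\right]$ so that both the $\imath\omega$-weighted volume terms and the real penalty terms contribute, bound the three consistency cross terms by Cauchy--Schwarz combined with Lemma~\ref{equivalence} applied to $\curl_h\bv$ and $\nabla_h\varphi+m\boldsymbol\rho$, absorb via Young's inequality, and take $\alpha_0$ of size comparable to $1/2+4C_{\Omega_C}^2+4C_{\Omega_I}^2$. Your explicit tracking of the $\omega\mu_0$-scaling of the insulator face penalty is merely a bookkeeping refinement of the same argument (the paper's displayed expansion silently drops that factor).
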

\begin{proof}
By definition of $A_h(\cdot, \cdot)$,  
\begin{equation}\label{elip0}
\begin{array}{l}
\text{Re}\left[(1 - \imath) A_h((\bv, \varphi,m), (\overline \bv, \overline \varphi,\overline m))\right] = \omega \norm{ \mu^{1/2} \bv}_{0,\Omega_C}^2 + 
 \norm{\sigma^{-1/2} \curl_h \bv}_{0,\Omega_C}^2\\[.1cm] 
\qquad+ 2  \text{Re} \left( \mean{\sigma^{-1} \curl_h \bv}_{\cF}, \jump{(\overline \bv, \overline \varphi,\overline m)}_{\cF} \right)_{\cF_h^{\Omega_C}} 
+  \mathtt{a}^{\Omega_C} \norm{h_{\cF}^{-1/2}\jump{(\bv, \varphi,m)}_{\cF}}_{0,\cF_h^{\Omega_C}}^2\\[.1cm] 
 \qquad +\omega \mu_0 \norm{\nabla_h \varphi+m \boldsymbol \rho}_{0,\Omega_C}^2 - 2 \omega \mu_0 \text{Re} \left( 
 \mean{\nabla_h \varphi+ m \boldsymbol \rho}_{\cF}, \jump{\overline \varphi \n}_{\cF}\right)_{\cF_h^{\Omega_I}} \\[.1cm] 
 \qquad+  \mathtt{a}^{\Omega_I}\norm{h_{\cF}^{-1/2}\jump{\varphi\n}_{\cF}}^2_{0,\cF_h^{\Omega_I}} 
- 2 \text{Re}  \left( \mean{\sigma^{-1} \curl_h \bv}_{\cE}, \jump{\overline \varphi \tg}_{\cE} \right)_{\cE_h} 
+  \alpha \norm{h_{\cE}^{-1}\jump{\varphi\tg}_{\cE}}^2_{0, \cE_h}.
\end{array}
\end{equation}
 
It follows from the Cauchy-Schwarz inequality and \eqref{discIneq1} that,
\begin{equation}\label{elip1}
\begin{array}{l}
2 |  \text{Re} \left( \mean{\sigma^{-1} \curl_h \bv}_{\cF}, \jump{(\overline \bv, \overline \varphi,\overline m)}_{\cF} \right)_{\cF_h^{\Omega_C}}|
\\[.1cm]
\qquad \leq 2 \norm{\mathtt{s}_{\cF}^{1/2} h_{\cF}^{1/2} \mean{\sigma^{-1} \curl_h \bv }_{\cF}}_{0,\cF_h^{\Omega_C}} 
\norm{\mathtt{s}_{\cF}^{-1/2} h_{\cF}^{-1/2} \jump{( \bv,  \varphi,m) }_{\cF}}_{0,\cF_h^{\Omega_C}} \\[.1cm]
\qquad\leq 2C_{\Omega_C} \norm{\sigma^{-1/2}\curl_h \bv}_{0,\Omega_C} \norm{\mathtt{s}_{\cF}^{-1/2} h_{\cF}^{-1/2} \jump{( \bv,  \varphi, m) }_{\cF}}_{0,\cF_h^{\Omega_C}} \\[.1cm]
\qquad\leq \frac{1}{4} \norm{\sigma^{-1/2}\curl_h \bv}_{0,\Omega_C}^2 + 4 C_{\Omega_C}^2   
\norm{\mathtt{s}_{\cF}^{-1/2} h_{\cF}^{-1/2}  \jump{( \bv,  \varphi, m) }_{\cF}}_{0,\cF_h^{\Omega_C}}^2.
\end{array}
\end{equation}
Similarly, by virtue of \eqref{discIneq2},
\begin{equation}\label{elip2}
\begin{array}{l}
2 | \text{Re} \left( 
 \mean{\nabla_h \varphi+m \boldsymbol \rho}_{\cF}, \jump{\overline \varphi \n}_{\cF}\right)_{\cF_h^{\Omega_I}} | \leq
2\norm{h_{\cF}^{1/2} \mean{\nabla_h \varphi+ m \boldsymbol \rho}_{\cF} }_{0,\cF_h^{\Omega_I}}  \norm{h_{\cF}^{-1/2} \jump{\overline \varphi \n}_{\cF} }_{0,\cF_h^{\Omega_I}}\\[.1cm]
\qquad \qquad \leq 2 C_{\Omega_I} \norm{\nabla_h \varphi+m \boldsymbol \rho}_{0,\Omega} \norm{h_{\cF}^{-1/2} \jump{ \varphi \n}_{\cF} }_{0,\cF_h^{\Omega_I}} \\[.1cm]
\qquad\qquad \leq \frac{1}{2} \norm{\nabla_h \varphi + m \boldsymbol \rho}_{0,\Omega}^2 + 4 C_{\Omega_I}^2 \norm{h_{\cF}^{-1/2} \jump{ \varphi \n}_{\cF} }_{0,\cF_h^{\Omega_I}}^2.	
\end{array}
\end{equation}
Finally, using \eqref{discIneq1} we have that 
\begin{equation}\label{elip3}
\begin{array}{l}
2 |\text{Re}  \left( \mean{\sigma^{-1} \curl_h \bv}_{\cE}, \jump{\overline \varphi \tg}_{\cE} \right)_{\cE_h}|\leq 
2 \norm{\mathtt{s}_{\cE}^{1/2} h_{\cE}\mean{\sigma^{-1} \curl_h \bv}_{\cE}}_{0,\cE_h} 
\norm{\mathtt{s}_{\cE}^{-1/2} h_{\cE}^{-1}\jump{ \varphi \tg}_{\cE}}_{0,\cE_h}  \\[.1cm]
\qquad \leq  
2 C_\Gamma  \norm{\sigma^{-1/2} \curl_h \bv}^2_{0,\Omega_C}
\norm{\mathtt{s}_{\cE}^{-1/2} h_{\cE}^{-1}\jump{ \varphi \tg}_{\cE}}_{0,\cE_h}\\[.1cm] 
\qquad\leq  \frac{1}{4}  
\norm{\sigma^{-1/2} \curl_h \bv}_{0,\Omega_C}^2 + 4 C_{\Omega_C}^2 \norm{\mathtt{s}_{\cE}^{-1/2} h_{\cE}^{-1}\jump{ \varphi \tg}_{\cE}}_{0,\cE_h}^2.
\end{array}
\end{equation}
Combining \eqref{elip0} with  \eqref{elip1}-\eqref{elip3} and choosing $\alpha_0 = 1/2 + 4 C_\Omega^2 + 4C_{\Omega_I}^2$ we obtain \eqref{elip}.
\end{proof}

We are now in a position to prove the $\norm{\cdot}$-stability of the DG scheme~\eqref{DG-FEM}. 
\begin{theorem}\label{LM}
Assume that $\sigma^{-1} \bj\in \mathbf{H}^{1/2+s}(\cT_h^{\Omega_C})$ and $\min(\mathtt{a}^\Omega, \mathtt{a}^{\Omega_I}, \alpha)\geq \alpha_0$. Then, 
there exits a unique $(\bH_{h},\psi_h,k_h) \in \mathbf{X}_h\times V_h\times \C$ solution of Problem~\eqref{DG-FEM}.
Moreover if $(\bH, \psi,k)\in [\mathbf{H}(\curl,\Omega)\times \rmH^1(\Omega_I)\times \C] \cap  [\mathbf{X}^s(\cT_h^{\Omega_C}) \times \rmH^{1+s}(\cT_h^{\Omega_I}) \times \C]$ is  the solution to \eqref{ModelProblem1}-\eqref{ModelProblem6}
then 
\begin{equation} \label{Cea}
\norm{(\bH - \bH_{h}, \psi - \psi_h, k -k_h)} \leq (1 + 2\sqrt{2} M) \inf_{(\bv, \varphi)\in \mathbf{X}_h\times V_h} \norm{(\bH - \bv, \psi -\varphi, 0)}_*.
\end{equation}
\end{theorem}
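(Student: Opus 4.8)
The plan is to derive both assertions from the three preceding propositions by the classical Strang-type argument for consistent DG schemes. \emph{Well-posedness.} Since $\mathbf{X}_h\times V_h\times\C$ is finite-dimensional, it suffices to show the homogeneous problem has only the trivial solution. So I would assume $A_h(\boldsymbol\zeta,\boldsymbol\chi)=0$ for all $\boldsymbol\chi\in\mathbf{X}_h\times V_h\times\C$; because the complex spaces $\mathbf{X}_h$, $V_h$ and $\C$ are invariant under conjugation, one may take $\boldsymbol\chi=\overline{\boldsymbol\zeta}$ and apply Proposition~\ref{discElip} (legitimate because the penalty parameters are chosen as there) to get $\tfrac12\norm{\boldsymbol\zeta}^2\le\text{Re}[(1-\imath)A_h(\boldsymbol\zeta,\overline{\boldsymbol\zeta})]=0$, hence $\norm{\boldsymbol\zeta}=0$. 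It then remains to check that $\norm{\cdot}$ is a \emph{norm}, not merely a seminorm, on the discrete space: writing $\boldsymbol\zeta=(\bv,\varphi,m)$, $\norm{\boldsymbol\zeta}=0$ forces $\bv=0$ in $\Omega_C$, $\jump{\varphi\n}_{\cF}=0$ on every face of $\cF_h^{\Omega_I}$ (so $\varphi\in\rmH^1(\Omega_I)$ with $\varphi|_\Sigma=0$), and $\nabla_h\varphi+m\boldsymbol\rho=0$ in $\Omega_I$; since $\boldsymbol\rho$ spans $\mathcal H^1(\Omega_I)$ it is not a gradient, which forces $m=0$, then $\nabla\varphi=0$, and finally $\varphi=0$ by connectedness of $\Omega_I$ and the vanishing trace on $\Sigma$.

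\emph{Quasi-optimality.} I would fix arbitrary $\bv\in\mathbf{X}_h$, $\varphi\in V_h$ and split the error as $(\bH-\bH_h,\psi-\psi_h,k-k_h)=\boldsymbol\eta+\boldsymbol\xi_h$ with interpolation part $\boldsymbol\eta:=(\bH-\bv,\psi-\varphi,0)$ (which lies in $\mathbf{X}^s(\cT_h^{\Omega_C})\times\rmH^{1+s}(\cT_h^{\Omega_I})\times\C$ under the stated regularity, so both $A_h$ and $\norm{\cdot}_*$ apply to it) and discrete part $\boldsymbol\xi_h:=(\bv-\bH_h,\varphi-\psi_h,k-k_h)\in\mathbf{X}_h\times V_h\times\C$. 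Combining Proposition~\ref{consistency0} with the scheme~\eqref{DG-FEM} and the linearity of $A_h$ in its first argument yields the Galerkin orthogonality $A_h((\bH-\bH_h,\psi-\psi_h,k-k_h),\boldsymbol\chi)=0$ for every discrete $\boldsymbol\chi$; choosing $\boldsymbol\chi=\overline{\boldsymbol\xi_h}$ (again admissible by conjugation-invariance) gives $A_h(\boldsymbol\xi_h,\overline{\boldsymbol\xi_h})=-A_h(\boldsymbol\eta,\overline{\boldsymbol\xi_h})$. Then, using Proposition~\ref{discElip} for $\boldsymbol\xi_h$, the elementary bound $\text{Re}[(1-\imath)z]\le\sqrt2\,|z|$, the boundedness Proposition~\ref{boundedness}, and $\norm{\overline{\boldsymbol\xi_h}}=\norm{\boldsymbol\xi_h}$, one finds
\[
\tfrac12\norm{\boldsymbol\xi_h}^2\le\text{Re}\bigl[(1-\imath)A_h(\boldsymbol\xi_h,\overline{\boldsymbol\xi_h})\bigr]\le\sqrt2\,\bigl|A_h(\boldsymbol\eta,\overline{\boldsymbol\xi_h})\bigr|\le\sqrt2\,M\,\norm{\boldsymbol\eta}_*\,\norm{\boldsymbol\xi_h},
\]
hence $\norm{\boldsymbol\xi_h}\le 2\sqrt2\,M\norm{\boldsymbol\eta}_*$. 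The triangle inequality and $\norm{\cdot}\le\norm{\cdot}_*$ then give $\norm{(\bH-\bH_h,\psi-\psi_h,k-k_h)}\le\norm{\boldsymbol\eta}+\norm{\boldsymbol\xi_h}\le(1+2\sqrt2\,M)\norm{\boldsymbol\eta}_*$, and taking the infimum over $\bv\in\mathbf{X}_h$ and $\varphi\in V_h$ produces~\eqref{Cea}.

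Once Propositions~\ref{consistency0}, \ref{boundedness} and \ref{discElip} are available the argument is essentially bookkeeping, so I do not anticipate a serious obstacle in this theorem itself. The one delicate point --- easy to overlook --- is the verification that $\norm{\cdot}$ is a genuine norm on $\mathbf{X}_h\times V_h\times\C$: the ``harmonic'' component $m$ is controlled by no single volume or jump term and is pinned down only through the relation $\nabla\varphi+m\boldsymbol\rho=0$ combined with the topological non-triviality of $\boldsymbol\rho$ (its nonzero circulation around the generator of $H_1(\Omega_I)$). A secondary, more cosmetic, point is that $\overline{\boldsymbol\zeta}$ and $\overline{\boldsymbol\xi_h}$ must be legitimate test functions, which is precisely why the conjugation-invariance of the complex finite element spaces is invoked.
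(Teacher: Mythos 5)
Your argument is correct and follows essentially the same route as the paper: coercivity from Proposition~\ref{discElip}, Galerkin orthogonality obtained from the consistency result, boundedness from Proposition~\ref{boundedness} (with the choice $m=k$ so the third error component is zero), and a final triangle inequality. The only difference is that you spell out why $\norm{\cdot}$ is a genuine norm on $\mathbf{X}_h\times V_h\times \C$ (using that $\boldsymbol\rho$ is not a gradient), a point the paper treats as immediate when deducing well-posedness from Proposition~\ref{discElip}.
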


\begin{proof}
The well posedness of Problem~\eqref{DG-FEM}  follows immediately from Proposition~\ref{discElip}.

Moreover we deduce from Proposition \ref{discElip} and  the consistency of the scheme that 
$$
\begin{array}{l}
\frac{1}{2} \norm{(\bH_{h}- \bv, \psi_h - \varphi, k_h-m)}^2 \\[.1cm]
\qquad \leq \text{Re} \left[ (1 - \imath) A_h((\bH_{h}- \bv, \psi_h - \varphi, k_h-m),(\bH_{C,h}- \bv, \psi_h - \varphi, k_h-m)) \right]  \\[.1cm]
\qquad = \text{Re} \left[ (1 - \imath) A_h((\bH- \bv, \psi - \varphi, k-m),(\bH- \bv, \psi - \varphi, k-m)) \right]
\end{array}
$$
for all $(\bv, \varphi,m)\in \mathbf{X}_h\times V_h\times \C$. Then from Proposition \ref{boundedness} we have
\[
\norm{(\bH_{h}- \bv, \psi_h - \varphi, k_h-m)} \leq 2 \sqrt{2} M \norm{(\bH - \bv, \psi - \varphi,k-m)}_\ast.
\]
The result follows now from the triangle inequality. 
\end{proof}

\section{Asymptotic error estimates}\label{sec5}

We denote by $\bPi_{h,m}^{\text{curl}}$ the $m$-order $\mathbf{H}(\curl, \Omega_C)$-conforming N\'ed\'elec interpolation operator of the second kind, see for example \cite{NED86} or \cite[Section 8.2]{Monk}.
It is well known  that $\bPi_{h,m}^{\text{curl}}$ is bounded on 
$\mathbf{H}(\curl, \Omega_C)\cap \mathbf{H}^s(\curl, \cT_h^{\Omega_C})$ for $s>1/2$,
where
\[
\mathbf{H}^s(\curl, \cT_h^{\Omega_C}) := \set{\bv \in \mathbf{H}^s(\cT_h^{\Omega_C});\quad \curl_h \bv \in \mathbf{H}^{s}(\cT_h^{\Omega_C})}.
\]
Moreover, there exists a constant $C_1>0$ independent of $h$ such that (cf. \cite{AVbook})
\begin{equation}\label{errorInterp1}
\norm{\bu - \bPi_{h,m}^{\text{curl}} \bu}_{0, \Omega_C} + \norm{\curl(\bu - \bPi_{h,m}^{\text{curl}}\bu)}_{0, \Omega_C} \leq C_1 h^{\min(s, m)} \big(  \norm{\bu}_{s, \cT_h^{\Omega_C}} + \norm{\curl_h \bu}_{s, \cT_h^{\Omega_C}} \big).
\end{equation}

We introduce 
$
\mathbf{L}^2_t(\Gamma) = \set{\boldsymbol{\varphi}\in \mathbf{L}^2(\Gamma);\,\,  \boldsymbol{\varphi}\cdot \n = 0}
$
and consider the $m$-order order Brezzi-Douglas-Marini (BDM) finite element approximation of the space 
\[
\mathbf{H}(\text{div}_\Gamma, \Gamma) := \set{ \boldsymbol{\varphi}\in \mathbf{L}_t^2(\Gamma); \quad 
\text{div}_\Gamma \boldsymbol{\varphi}\in L^2(\Gamma) }
\]
relatively to the mesh $\cF_h^\Gamma$ (see, e.g. \cite{Boffi}). It is given by 
\[
\mathcal{\mathbf{BDM}}(\cF_h^\Gamma) = \set{\boldsymbol{\varphi}\in  \mathbf{H}(\text{div}_\Gamma, \Gamma); \quad 
\boldsymbol{\varphi}|_T \in \cP_m(T)^2,\quad \forall T\in \cF_h^\Gamma}.
\]
The corresponding interpolation operator $\Pi_{h,m}^{\text{BDM}}$ is bounded on $
\mathbf{H}(\text{div}_\Gamma, \Gamma)\cap \prod_{T\in \cF_h^\Gamma} \rmH^\delta(T)^2$ for all $\delta>0$ and we recall that it is uniquely characterized on each  
$T\in \cF_h^\Gamma$ by the conditions
\begin{equation}\label{BDMfreedom1}
\int_e \Pi_{h,m}^{\text{BDM}} \boldsymbol{\varphi}\cdot \n_T q = \int_e \boldsymbol{\varphi}\cdot \n_T q\quad 
\forall q \in \cP_m(e),\quad \forall e\in \cE(T),
\end{equation}
\begin{equation}\label{BDMfreedom2}
\int_T \Pi_{h,m}^{\text{BDM}} \boldsymbol{\varphi}\cdot  \mathbf{q} = \int_T \boldsymbol{\varphi}\cdot  \mathbf{q}\quad 
\forall \mathbf{q} \in \cP_{m-2}(T)^2 + \mathbf{S}_{m-1}(T),
\end{equation}
where $\mathbf{S}_{m-1}(T):= \set{\mathbf{q}\in \tilde\cP_{m-1}(T)^2;\quad \mathbf{q}\cdot \begin{pmatrix}x_1\\x_2
\end{pmatrix} = 0}$ with $\tilde\cP_{m-1}(T)$ representing the set of homogeneous polynomials of degree $m-1$ and 
$\begin{pmatrix}x_1\\x_2\end{pmatrix}$ being the local variable on the plane containing $T$.

The  commuting diagram property
\begin{align}
\label{commuting2} (\bPi_{h,m}^{\text{curl}} \bu) \times \n_\Gamma &= \Pi_{h,m}^{\text{BDM}} (\bu \times \n_\Gamma)
\end{align}
holds true for all $\bu \in \mathbf{H}(\curl, \Omega_C)\cap \mathbf{H}^s(\curl, \cT_h^{\Omega_C})$, $s>1/2$, see \cite[section 9]{Hiptmair1} for more details. 

For all $K\in \cT_h^{\Omega_I}$ we define the local interpolation operator $\tilde\pi_{K,m}:\rmH^{1+s}(K)\to \tilde\cP_{m}(K)$, $s>1/2$ as follows: recalling the definition of $\tilde\cP_{m}(K)$ given in \eqref{tildePm}
\begin{itemize}
\item if $\partial K \cap \Gamma \not \in \mathcal F_h^\Gamma$ then $\tilde\cP_{m}(K)= \cP_{m}(K)$ and we take $\tilde\pi_{K,m} = \pi_{K,m}$, where $\pi_{K,m}$ is defined as in \cite[Section 5.6]{Monk};
\item if $\partial K \cap \Gamma=T  \in \mathcal F_h^\Gamma$ then $\tilde\cP_{m}(K)=\cP_m(K) + \cP_{m+1}^T(K)$ and $\tilde \pi_{K,m}$ is defined by changing the conditions defining $\pi_{K,m}$ on $T$ and on the edges composing $T$ into 
\begin{equation}\label{freedomFb}
\int_T \tilde\pi_{K,m} p q = \int_T p q \qquad \forall q \in \cP_{m-2}(T)
\end{equation}
and
\begin{equation}\label{freedomEb}
\int_e \tilde\pi_{K,m} p q = \int_e p q \qquad \forall q \in \cP_{m-1}(e), \quad \forall e\in \cE(F)
\end{equation}
respectively. The remaining degrees of freedom are the same as those defining $\pi_{K,m}$, see \cite[Section 5.6]{Monk}.
\end{itemize}
We notice that 
$
\text{dim}(\cP_m(K) + \cP_{m+1}^T(K))= \text{dim}(\cP_m(K)) + m+1
$
and the number of degrees of freedom defining $\tilde \pi_{K,m}$ is equal to the number of degrees of freedom of  $\pi_{K,m}$ 
plus $\text{dim}(\cP_{m-2}(T)) - \text{dim}(\cP_{m-3}(T)) = m-1$ additional degrees of freedom on $T$ and one additional degree of freedom on each of the three edges of $T$, which gives a total of $\text{dim}(\cP_m(K)) + m+1$ degrees of freedom. Using this fact, it is straightforward to show that $\tilde \pi_{K,m}$ is uniquely determined 
on elements $K\in \cT_h^{\Omega_I}$ with a face $T$ lying on $\Gamma$. Moreover, it is clear that the 
corresponding global $\rmH^1(\Omega)$-conforming interpolation operator $\tilde \pi_{h,m}$ satisfies the following  interpolation error estimate. 
\begin{prop}
If $p\in \rmH^1(\Omega_I)\cap \rmH^{1+s}(\cT_h^{\Omega_I})$ with $s>1/2$, there exists a constant $C>0$ independent of $h$ such that 
\begin{equation}\label{interp1}
\norm{\nabla (p - \tilde \pi_{h,m} p)}_{0,\Omega_I} \leq C h^{\min(m, s)} \norm{p}_{1+s, \cT_h^{\Omega_I}}.
\end{equation}
\end{prop}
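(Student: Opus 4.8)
The strategy is the classical one for finite-element interpolation error bounds: localize to a single element, pass to a reference configuration, invoke a Bramble--Hilbert type argument, and sum back up. First I would split the global error into elementwise contributions,
\[
\norm{\nabla(p-\tilde\pi_{h,m}p)}_{0,\Omega_I}^2 = \sum_{K\in\cT_h^{\Omega_I}} \norm{\nabla(p-\tilde\pi_{K,m}p)}_{0,K}^2,
\]
using that $\tilde\pi_{h,m}p$ restricted to $K$ coincides with the local interpolant $\tilde\pi_{K,m}(p|_K)$. This reduces everything to a local estimate of the form $\norm{\nabla(p-\tilde\pi_{K,m}p)}_{0,K}\le C h_K^{\min(m,s)}\norm{p}_{1+s,K}$ on each tetrahedron, with $C$ depending only on the shape-regularity constant.

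For elements with no face on $\Gamma$ the operator $\tilde\pi_{K,m}$ is exactly the standard operator $\pi_{K,m}$ of \cite[Section 5.6]{Monk}, for which the stated local estimate is already known (it is $\rmH^1$-bounded and polynomial-preserving of degree $m$, and a scaling/Bramble--Hilbert argument gives the $h_K^{\min(m,s)}$ rate for $p\in\rmH^{1+s}(K)$, $s>1/2$, the latter threshold being exactly what guarantees that the face and edge degrees of freedom are well-defined by trace theory). For the boundary elements $K$ with $T=\partial K\cap\Gamma\in\cF_h^\Gamma$, the key points established just before the proposition are that $\tilde\pi_{K,m}$ is \emph{well-defined and unimodally determined} on $\tilde\cP_m(K)=\cP_m(K)+\cP_{m+1}^T(K)$ by the modified degrees of freedom \eqref{freedomFb}--\eqref{freedomEb} together with the unchanged ones, and that it is a \emph{projection}, i.e. $\tilde\pi_{K,m}q=q$ for all $q\in\tilde\cP_m(K)$; in particular it reproduces all of $\cP_m(K)$. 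Mapping to the reference tetrahedron $\hat K$ via the affine diffeomorphism $F_K$, the pulled-back operator $\hat\pi$ is a fixed (shape-independent up to bounded equivalence) bounded linear operator on $\rmH^{1+s}(\hat K)$ that fixes $\cP_m(\hat K)$; the Bramble--Hilbert/Deny--Lions lemma then yields $\norm{\hat p-\hat\pi\hat p}_{1,\hat K}\le \hat C\,|\hat p|_{1+s,\hat K}$ (fractional Sobolev seminorm), and the usual scaling estimates for affine maps on shape-regular simplices transfer this back to $K$ with the factor $h_K^{\min(m,s)}$ and a constant independent of $h$.

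The main obstacle is the boundedness of $\tilde\pi_{K,m}$ on $\rmH^{1+s}(\hat K)$ \emph{uniformly in the shape-regular family}, because the modified face/edge functionals \eqref{freedomFb}--\eqref{freedomEb} involve moments against $\cP_{m-2}(T)$ and $\cP_{m-1}(e)$ of the trace of $p$ (not of $\nabla p$), so one must check these traces make sense: for $s>1/2$ the trace of $p\in\rmH^{1+s}(K)$ lies in $\rmH^{1/2+s}(T)\hookrightarrow L^2(T)$ and its edge trace lies in $L^2(e)$, which is exactly enough for \eqref{freedomFb}--\eqref{freedomEb} to be continuous functionals; the remaining (unchanged) interior and volume degrees of freedom of $\pi_{K,m}$ are already known to be continuous on $\rmH^{1+s}(K)$ from \cite{Monk}. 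Assembling these, $\hat\pi$ is bounded on $\rmH^{1+s}(\hat K)$ with a shape-independent constant, and the proof concludes by squaring the local bounds, summing over $K\in\cT_h^{\Omega_I}$, bounding $h_K\le h$, and recognizing $\sum_K\norm{p}_{1+s,K}^2=\norm{p}_{1+s,\cT_h^{\Omega_I}}^2$.
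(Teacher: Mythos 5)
Your proposal is correct and follows essentially the same route as the paper, whose proof is simply a citation of \cite[Lemma 5.47]{Monk} and \cite[Theorem 5.48]{Monk} — results established by exactly the localization, affine scaling, polynomial-invariance and Bramble--Hilbert argument you spell out. You additionally verify explicitly what the paper leaves implicit, namely that the modified degrees of freedom \eqref{freedomFb}--\eqref{freedomEb} on boundary elements remain continuous on $\rmH^{1+s}(K)$ for $s>1/2$ and that $\tilde\pi_{K,m}$ still reproduces $\cP_m(K)$, which is all that is needed to run the same argument there.
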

\begin{proof}
See \cite[Lemma 5.47]{Monk} and \cite[Theorem 5.48]{Monk}.
\end{proof}

The commuting diagram property stated in the next proposition is the reason for which we use 
$\tilde \pi_h$ instead of the usual Lagrange interpolation operator.   
\begin{prop}\label{commuting3}
For any $p\in H^1(\Omega)\cap \rmH^{1+s}(\cT_h^{\Omega_I})$, with $s>1/2$, it holds
\[
\nabla \tilde \pi_{h,m} p \times \n_\Gamma = \Pi_{h,m}^{\text{BDM}} (\nabla p \times \n_\Gamma).
\]
\end{prop}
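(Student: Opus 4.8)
The plan is to prove the identity face by face. On each $T\in\cF_h^\Gamma$ the functionals \eqref{BDMfreedom1}--\eqref{BDMfreedom2} are unisolvent on $\cP_m(T)^2$ (this is how $\Pi_{h,m}^{\text{BDM}}$ is characterised), so it is enough to show that, for every such $T$, the two sides of the asserted identity carry the same values under all of \eqref{BDMfreedom1}--\eqref{BDMfreedom2}. Two preliminary remarks set the stage. First, for a scalar $\phi$ defined near $\Gamma$ one has $\nabla\phi\times\n_\Gamma=\nabla_\Gamma\phi\times\n_\Gamma$; writing $K\in\cT_h^{\Omega_I}$ for the unique insulator element with $T=\partial K\cap\Gamma$, this gives $(\nabla\tilde\pi_{h,m}p\times\n_\Gamma)|_T=\nabla_\Gamma(\tilde\pi_{K,m}p|_T)\times\n_\Gamma$, which lies in $\cP_m(T)^2$ because $\tilde\pi_{K,m}p|_T\in\tilde\cP_m(K)|_T=\cP_{m+1}(T)$; moreover $\nabla p\times\n_\Gamma$ belongs to the domain of $\Pi_{h,m}^{\text{BDM}}$ (so the right-hand side makes sense), since $p\in\rmH^{1+s}(\cT_h^{\Omega_I})$ with $s>1/2$ gives $\nabla p\times\n_\Gamma\in\prod_T\rmH^{s-1/2}(T)^2$ with $s-1/2>0$ and $\div_\Gamma(\nabla p\times\n_\Gamma)=\curl(\nabla p)\cdot\n_\Gamma=0$; and by \eqref{BDMfreedom1}--\eqref{BDMfreedom2}, $\Pi_{h,m}^{\text{BDM}}(\nabla p\times\n_\Gamma)$ carries, on each $T$, exactly the functional values of $\nabla p\times\n_\Gamma$. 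Second, the only property of $\tilde\pi_{K,m}$ I shall use is that on $T\in\cF_h^\Gamma$ the polynomial $\tilde\pi_{K,m}p|_T\in\cP_{m+1}(T)$ is the unique element of $\cP_{m+1}(T)$ matching the three vertex values of $p$, the edge moments \eqref{freedomEb} of $p$ against $\cP_{m-1}(e)$ for every $e\in\cE(T)$, and the interior moments \eqref{freedomFb} of $p$ against $\cP_{m-2}(T)$; these functionals are unisolvent on $\cP_{m+1}(T)$ by the dimension count $3+3m+\dim\cP_{m-2}(T)=\dim\cP_{m+1}(T)$, analogous to the one recorded just before the proposition.

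For the edge functionals, fix $e\in\cE(T)$ and $q\in\cP_m(e)$. Along $e$ one has $(\nabla_\Gamma\phi\times\n_\Gamma)\cdot\n_T=\pm\,\partial_{\tg_e}\phi$, the sign depending only on the chosen orientations; integrating by parts along $e$, $\int_e\big((\nabla_\Gamma\phi\times\n_\Gamma)\cdot\n_T\big)q=\pm\big([\phi\,q]_{\partial e}-\int_e\phi\,\partial_{\tg_e}q\big)$. This expression is literal for $\phi=\tilde\pi_{K,m}p|_T$ and is also meaningful for $\phi=p|_T$, since $p|_T\in\rmH^{1/2+s}(T)\hookrightarrow C^0(\overline{T})$ makes the endpoint terms legitimate and $\partial_{\tg_e}q\in\cP_{m-1}(e)$ makes the remaining term an ordinary integral; by density it coincides with the $\mathbf{H}(\div_\Gamma,\Gamma)$-normal-trace reading of \eqref{BDMfreedom1}. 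Now $\partial_{\tg_e}q\in\cP_{m-1}(e)$ and the vertex values of $\tilde\pi_{K,m}p$ equal those of $p$, so \eqref{freedomEb} yields $\int_e\tilde\pi_{K,m}p\,\partial_{\tg_e}q=\int_e p\,\partial_{\tg_e}q$ and $[\tilde\pi_{K,m}p\,q]_{\partial e}=[p\,q]_{\partial e}$; therefore $\int_e\big((\nabla\tilde\pi_{h,m}p\times\n_\Gamma)\cdot\n_T\big)q=\int_e\big((\nabla p\times\n_\Gamma)\cdot\n_T\big)q$, which by \eqref{BDMfreedom1} equals $\int_e\big((\Pi_{h,m}^{\text{BDM}}(\nabla p\times\n_\Gamma))\cdot\n_T\big)q$.

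For the interior functionals, fix $\mathbf{q}\in\cP_{m-2}(T)^2+\mathbf{S}_{m-1}(T)$ and use the per-face surface integration by parts identity $\int_T(\nabla_\Gamma\phi\times\n_\Gamma)\cdot\mathbf{q}=\int_T\phi\,\mathrm{rot}_T\mathbf{q}-\int_{\partial T}\phi\,(\mathbf{q}\cdot\tg_{\partial T})$ (the one used, globally over $\cF_h^\Gamma$, in the proof of Proposition~\ref{consistency0}), with $\mathrm{rot}_T$ the scalar surface rotation. The decisive bookkeeping is that these test quantities have small enough degree: for $\mathbf{q}\in\cP_{m-2}(T)^2$ one has $\mathrm{rot}_T\mathbf{q}\in\cP_{m-3}(T)\subset\cP_{m-2}(T)$ and $\mathbf{q}\cdot\tg_{\partial T}|_e\in\cP_{m-2}(e)\subset\cP_{m-1}(e)$, while for $\mathbf{q}\in\mathbf{S}_{m-1}(T)$ the components of $\mathbf{q}$ are homogeneous of degree $m-1$, so $\mathrm{rot}_T\mathbf{q}$ is homogeneous of degree $m-2$, hence in $\cP_{m-2}(T)$, and $\mathbf{q}\cdot\tg_{\partial T}|_e$, the restriction of a degree-$(m-1)$ polynomial to a line, lies in $\cP_{m-1}(e)$. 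Hence \eqref{freedomFb} gives $\int_T\tilde\pi_{K,m}p\,\mathrm{rot}_T\mathbf{q}=\int_T p\,\mathrm{rot}_T\mathbf{q}$, and \eqref{freedomEb}, used edge by edge, gives $\int_{\partial T}\tilde\pi_{K,m}p\,(\mathbf{q}\cdot\tg_{\partial T})=\int_{\partial T}p\,(\mathbf{q}\cdot\tg_{\partial T})$; inserting these into the identity for $\phi=\tilde\pi_{K,m}p|_T$ and then reading it backwards for $\phi=p|_T$ yields $\int_T(\nabla\tilde\pi_{h,m}p\times\n_\Gamma)\cdot\mathbf{q}=\int_T(\nabla p\times\n_\Gamma)\cdot\mathbf{q}$, which by \eqref{BDMfreedom2} equals $\int_T(\Pi_{h,m}^{\text{BDM}}(\nabla p\times\n_\Gamma))\cdot\mathbf{q}$. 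Thus on every $T\in\cF_h^\Gamma$ the two fields $\nabla_\Gamma(\tilde\pi_{K,m}p|_T)\times\n_\Gamma$ and $\Pi_{h,m}^{\text{BDM}}(\nabla p\times\n_\Gamma)|_T$, both in $\cP_m(T)^2$, share all the functional values \eqref{BDMfreedom1}--\eqref{BDMfreedom2}; unisolvence forces them to coincide on $T$, hence the identity holds on all of $\Gamma$.

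The step I expect to be the main obstacle is this degree bookkeeping: one must check that the modified moment spaces built into $\tilde\pi_{K,m}$ are precisely matched to what the $\mathbf{BDM}$ functionals produce---namely that $\partial_{\tg_e}q$ for $q\in\cP_m(e)$, the traces $\mathbf{q}\cdot\tg_{\partial T}|_e$, and the rotations $\mathrm{rot}_T\mathbf{q}$ coming from the \emph{whole} space $\cP_{m-2}(T)^2+\mathbf{S}_{m-1}(T)$ all land in $\cP_{m-1}(e)$, resp.\ $\cP_{m-2}(T)$. This is exactly why the test spaces $\cP_{m-1}(e)$ in \eqref{freedomEb} and $\cP_{m-2}(T)$ in \eqref{freedomFb} were chosen one degree larger than the $\cP_{m-2}(e)$ and $\cP_{m-3}(T)$ of the ordinary Lagrange interpolant $\pi_{K,m}$, and it is the real content of the proposition. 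The remaining points are routine but need care: fixing once and for all compatible orientations for $\n_\Gamma$, $\n_T$, $\tg_{\partial T}$ and $\tg_e$ so that all the $\pm$ signs above are consistent; interpreting \eqref{BDMfreedom1} applied to the low-regularity field $\nabla p\times\n_\Gamma$ through its $\mathbf{H}(\div_\Gamma,\Gamma)$-normal-trace (equivalently, through the integrated-by-parts expression above), which is also what makes $\Pi_{h,m}^{\text{BDM}}$ applicable to it; and noting that all pointwise vertex values and edge and face integrals above are meaningful because $s>1/2$ puts $p|_T$ in $\rmH^{1/2+s}(T)\hookrightarrow C^0(\overline{T})$.
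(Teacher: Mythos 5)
Your proposal is correct and follows essentially the same route as the paper: identify $\nabla\tilde\pi_{h,m}p\times\n_\Gamma$ as an element of $\mathcal{\mathbf{BDM}}(\cF_h^\Gamma)$ and show it matches $\Pi_{h,m}^{\text{BDM}}(\nabla p\times\n_\Gamma)$ on the degrees of freedom \eqref{BDMfreedom1}--\eqref{BDMfreedom2}, using integration by parts along edges and on faces together with the vertex matching and the enlarged moment sets \eqref{freedomEb}, \eqref{freedomFb} and the same degree bookkeeping ($\partial q/\partial\tg_e\in\cP_{m-1}(e)$, $\div_\Gamma(\mathbf{q}\times\n_\Gamma)\in\cP_{m-2}(T)$, $\mathbf{q}\cdot\tg_e\in\cP_{m-1}(e)$). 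The only cosmetic difference is that the paper subtracts the two fields and shows the difference has vanishing BDM functionals, whereas you invoke per-face unisolvence after showing both carry the same functional values; these are the same argument.
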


\begin{proof}
We first notice that $\nabla \tilde \pi_{h,m} p \times \n_\Gamma \in \mathbf{H}(\text{div}_\Gamma, \Gamma)$  and 
$\nabla \tilde \pi_{h,m} p \times \n_\Gamma \in \cP_m(T)$ for all $T\in \cF_h^\Gamma$. Hence, $\nabla \tilde \pi_{h,m} p \times \n_\Gamma \in \mathcal{\mathbf{BDM}}(\cF_h^\Gamma)$. To show that $\nabla \tilde \pi_{h,m} p \times \n_\Gamma = \Pi_{h,m}^{\text{BDM}} (\curl_\Gamma p)$, 
it is sufficient to compare the degrees of freedom of these two tangential fields on each triangle $T\in \cF_h^\Gamma$.
On the one hand, for all $q\in \cP_m(e)$, $e\in \cE(T)$,  
$$
\begin{array}{l}
\displaystyle{\int_e (\nabla \tilde \pi_{h,m} p \times \n_\Gamma - \Pi_{h,m}^{\text{BDM}} (\nabla p \times \n_\Gamma))\cdot \n_F q}\\[.3cm]
\qquad  = \displaystyle{\int_e \nabla \left((\tilde\pi_{h,m} p -  p)\times n_\Gamma \right)\cdot \n_F q 
= \int_e \frac{ \partial(\tilde\pi_{h,m} p -  p)}{\partial\tg_e} q} \\[.3cm]
\qquad=\displaystyle{ - \int_e (\tilde\pi_{h,m} p -  p) \frac{ \partial q}{\partial\tg_e} + 
(\tilde\pi_{h,m} p -  p)(\mathbf{a}_e) q(\mathbf{a}_e) - (\tilde\pi_{h,m} p -  p)(\mathbf{b}_e) q(\mathbf{b}_e)=0}\, ,
\end{array}
$$
where the last identity follows from the fact that $\tilde\pi_{h,m} p$ and  $p$ must 
coincide at the endpoints $\mathbf{a}_e$ and $\mathbf{b}_e$ of edge $e$ (by definition of the $\tilde \pi_{h,m}$) and  
from \eqref{freedomEb}, taking into account that 
$\frac{ \partial q}{\partial\tg_e}\in \cP_{m-1}(e)$.

On the other hand, for any $\mathbf{q}\in \cP_{m-2}(T)^2 + \mathbf{S}_{m-1}(T)$, we have that 
$$
\begin{array}{l}
\displaystyle{\int_T (\nabla \tilde \pi_{h,m} p \times \n_\Gamma- \Pi_{h,m}^{\text{BDM}} (\nabla p \times \n_\Gamma))\cdot \mathbf{q} }\\
\qquad \displaystyle{=\int_T \nabla (\tilde \pi_{h,m}p -  p) \times \n_\Gamma  \cdot \mathbf{q}=-\int_T \nabla (\tilde \pi_{h,m}p -  p) \cdot (\mathbf{q}\times \n_\Gamma) }\\
\qquad =\displaystyle{ \int_T  (\tilde\pi_{h,m} p -  p) \,\text{div}_\Gamma (\mathbf{q}\times \n_\Gamma)
- \sum_{e\in \cE(T)}\int_{e} (\tilde\pi_{h,m} p -  p)\, (\mathbf{q}\times \n_\Gamma)\cdot \boldsymbol \nu_T }\\
\qquad \displaystyle{=\int_T  (\tilde\pi_{h,m} p -  p) \,\text{div}_\Gamma (\mathbf{q}\times \n_\Gamma)
- \sum_{e\in \cE(T)}\int_{e} (\tilde\pi_{h,m} p -  p)\, \mathbf{q} \cdot \tg_e }
= 0
\end{array}
$$
by virtue of \eqref{freedomFb} and \eqref{freedomEb}, since $\text{div}_\Gamma (\mathbf{q} \times \n_\Gamma)\in \cP_{m-2}(F)$ 
and $\mathbf{q}\cdot \tg_e\in \cP_{m-1}(e)$.
\end{proof}


Finally, we consider the $\mathbf{L}^2(\cT^{\Omega_C}_h)$-orthogonal projection  ${\bf P}^k_{\cT_h^{\Omega_C}}$ 
onto $\prod_{K\in \cT^{\Omega_C}_h} \cP_k(K)^3$ and the $\mathbf{L}^2(\cT^{\Omega_I}_h)$-orthogonal projection   ${\bf P}^k_{\cT_h^{\Omega_I}}$ onto $\prod_{K\in \cT^{\Omega_I}_h} \cP_k(K)^3$, $k\geq 0$. We denote indifferently 
by $\bPi^k_K$ the restriction of $\bPi^k_{\cT_h^{\Omega_C}}$ and $\bPi^k_{\cT_h^{\Omega_I}}$ to an element $K$.
\begin{lemma}\label{v}
For all $K\in \cT_h$ and  $\mathbf{w}\in \mathbf{H}^{r}(K)$, $r\geq 1/2$, we have
 \begin{equation}\label{proj}
  h_F\norm{\mathbf{w} - {\bf P}^k_K \mathbf{w}}_{0,\partial F} + h_K^{1/2}\norm{\mathbf{w} - {\bf P}^k_K \mathbf{w}}_{0,\partial K}  + \norm{\mathbf{w} - {\bf P}^k_K \mathbf{w}}_{0,K} 
  \leq C h_K^{\min\{r,k+1\}} \norm{\mathbf{w}}_{r,K},
 \end{equation}
 with a constant $C>0$ independent of $h$. 
\end{lemma}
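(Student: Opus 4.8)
The estimate \eqref{proj} is the classical local approximation property of the $\mathbf{L}^2$-projection, and I would derive it by the usual device of inserting a good polynomial and exploiting the $\mathbf{L}^2$-stability of ${\bf P}^k_K$. Fix $K\in\cT_h$ and let $\mathbf{q}\in\cP_k(K)^3$ be a degree-$k$ averaged Taylor polynomial of $\mathbf{w}$ on $K$, which, by the scaled Bramble--Hilbert/Dupont--Scott estimates (see e.g. \cite[Ch.~1]{DiPietroErn}, \cite[Sec.~5.6]{Monk}), satisfies $|\mathbf{w}-\mathbf{q}|_{j,K}\le Ch_K^{\min\{r,k+1\}-j}\norm{\mathbf{w}}_{r,K}$ for $0\le j\le\min\{r,k+1\}$, with $C$ depending only on the shape-regularity constant and on $k,r$. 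Since ${\bf P}^k_K$ reproduces $\cP_k(K)^3$ we have $\mathbf{w}-{\bf P}^k_K\mathbf{w}=(I-{\bf P}^k_K)(\mathbf{w}-\mathbf{q})$, so $\norm{{\bf P}^k_K\mathbf{v}}_{0,K}\le\norm{\mathbf{v}}_{0,K}$ gives at once
\[
\norm{\mathbf{w}-{\bf P}^k_K\mathbf{w}}_{0,K}\le 2\norm{\mathbf{w}-\mathbf{q}}_{0,K}\le Ch_K^{\min\{r,k+1\}}\norm{\mathbf{w}}_{r,K}.
\]

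For the boundary contributions I would combine this with a scaled trace inequality on the simplex $K$, namely $\norm{\mathbf{v}}_{0,\partial K}^2\le C\big(h_K^{-1}\norm{\mathbf{v}}_{0,K}^2+h_K|\mathbf{v}|_{1,K}^2\big)$ for $\mathbf{v}\in\mathbf{H}^1(K)$, applied to $\mathbf{v}=\mathbf{w}-{\bf P}^k_K\mathbf{w}$. Here one also needs $|\mathbf{w}-{\bf P}^k_K\mathbf{w}|_{1,K}\le Ch_K^{\min\{r,k+1\}-1}\norm{\mathbf{w}}_{r,K}$, which follows from $|\mathbf{w}-{\bf P}^k_K\mathbf{w}|_{1,K}\le|\mathbf{w}-\mathbf{q}|_{1,K}+|{\bf P}^k_K(\mathbf{w}-\mathbf{q})|_{1,K}$, the polynomial estimate above, and the inverse inequality $|{\bf P}^k_K(\mathbf{w}-\mathbf{q})|_{1,K}\le Ch_K^{-1}\norm{\mathbf{w}-\mathbf{q}}_{0,K}$ valid on $\cP_k(K)^3$; multiplying the resulting trace bound by $h_K^{1/2}$ yields the term $h_K^{1/2}\norm{\mathbf{w}-{\bf P}^k_K\mathbf{w}}_{0,\partial K}$. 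The edge term is obtained in the same fashion: for a face $F$ of $K$ lying on $\Gamma$ one first restricts to $F$ through the trace $\mathbf{H}^r(K)\to\mathbf{H}^{r-1/2}(F)$ and then applies the scaled trace inequality on the two-dimensional simplex $F$ (whose edge measure is $\simeq h_F$), which produces $h_F\norm{(\mathbf{w}-{\bf P}^k_K\mathbf{w})|_F}_{0,\partial F}\le Ch_K^{\min\{r,k+1\}}\norm{\mathbf{w}}_{r,K}$. Summing the three bounds gives \eqref{proj}.

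The argument is otherwise routine; the one point that requires care is the low-regularity end $r\ge1/2$. For $1/2<r<1$ the $\mathbf{H}^1$-seminorm is unavailable, so for the $\partial K$ term I would instead rescale to the reference simplex $\hat K$ and use the trace embedding $\mathbf{H}^r(\hat K)\hookrightarrow\mathbf{L}^2(\partial\hat K)$ directly on $\hat{\mathbf{w}}-\hat{\bf P}^k\hat{\mathbf{w}}$, keeping track of the fact that the surface measure scales like $h_K^2$ against the volume scaling $h_K^3$. This is exactly where the bookkeeping of the powers of $h_K$ coming from the change of variables in the (fractional) Sobolev (semi)norms must be done carefully, and it is the only genuinely delicate step; everything else reduces to the scaled Bramble--Hilbert lemma and standard inverse and trace inequalities on shape-regular simplices.
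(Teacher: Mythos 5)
Your argument is correct and is essentially the approach the paper relies on: its proof of Lemma~\ref{v} is just a citation of \cite{DiPietroErn} (Lemma 1.52, the scaled continuous trace inequality, and Lemma 1.58, the approximation properties of the $\mathbf{L}^2$-projection), and inserting an averaged Taylor polynomial, using the $\mathbf{L}^2$-stability of ${\bf P}^k_K$, and combining scaled trace, inverse, and Bramble--Hilbert estimates (with a reference-element scaling for fractional $r$) is precisely the argument behind those lemmas. The only caveat concerns the stated range $r\geq 1/2$: the edge term $h_F\norm{\mathbf{w}-{\bf P}^k_K\mathbf{w}}_{0,\partial F}$ needs $r>1$ (so that the trace lies in $\rmH^{r-1/2}(F)$ with $r-1/2>1/2$) and the $\partial K$ term needs $r>1/2$, a limitation inherent in the statement itself rather than in your proof, and harmless since in Theorem~\ref{mainDGFEM} the lemma is only invoked with $r=1/2+s>1$ for the edge term and $r=s>1/2$ otherwise.
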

\begin{proof}
See \cite{DiPietroErn}, Lemma 1.58 and Lemma 1.52.
\end{proof}

We are now in a position to prove the main result of this section. 
\begin{theorem}\label{mainDGFEM}
Let $(\bH, \psi,k)\in \mathbf{H}(\curl, \Omega_C)\times \rmH^1(\Omega_I)\times \C$ and $(\bH_{h}, \psi_h,k_h)\in \mathbf{X}_h\times V_h\times C$ be the solutions to \eqref{ModelProblem1}-\eqref{ModelProblem6} and  \eqref{DG-FEM} respectively.
If $\sigma^{-1} \bj\in \mathbf{H}^{1/2+s}(\cT_h^{\Omega_C})$, $(\bH, \psi)\in \mathbf{X}^s(\cT_h^{\Omega_C})\times \rmH^{1+s}(\cT_h^{\Omega_I})$,  with $s>1/2$, and $\min(\mathtt{a}^{\Omega_C}, \mathtt{a}^{\Omega_I}, \alpha)\geq \alpha_0$, then 
\[
\norm{(\bH - \bH_{h}, \psi - \psi_h, k-k_h)} \leq C h^{\min(s, m)} \Big( 
 \norm{\bH}_{s, \cT_h^\Omega} + \norm{\curl\, \bH}_{1/2 + s, \cT_h^\Omega} + \norm{\psi}_{1+s, \cT_h^{\Omega_I}}
\Big),
\]
where $C>0$ is a constant independent of $h$.	
\end{theorem}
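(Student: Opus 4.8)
The plan is to combine Céa's estimate \eqref{Cea} from Theorem~\ref{LM} with suitable approximation operators, the key point being that the $\norm{\cdot}_\ast$-norm on the right-hand side must be controlled, and in particular the extra terms involving $\mean{\sigma^{-1}\curl_h \cdot}_{\cF}$, $\mean{\sigma^{-1}\curl_h \cdot}_{\cE}$ and $\mean{\nabla_h \cdot +m\boldsymbol\rho}_{\cF}$ that distinguish $\norm{\cdot}_\ast$ from $\norm{\cdot}$. Concretely, I would choose the approximation $(\bv,\varphi) = (\bPi_{h,m}^{\text{curl}}\bH, \tilde\pi_{h,m}\psi)$ in the infimum of \eqref{Cea}, so that
\[
\norm{(\bH - \bH_{h}, \psi - \psi_h, k-k_h)} \le (1 + 2\sqrt{2}M)\,\norm{(\bH - \bPi_{h,m}^{\text{curl}}\bH,\; \psi - \tilde\pi_{h,m}\psi,\; 0)}_\ast,
\]
and then estimate each of the six (resp. nine) terms making up $\norm{\cdot}_\ast^2$ separately.

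First I would handle the volume terms. The term $\norm{(\omega\mu)^{1/2}(\bH - \bPi_{h,m}^{\text{curl}}\bH)}_{0,\Omega_C}^2 + \norm{\sigma^{-1/2}\curl_h(\bH - \bPi_{h,m}^{\text{curl}}\bH)}_{0,\Omega_C}^2$ is bounded directly by the N\'ed\'elec interpolation estimate \eqref{errorInterp1}, giving a factor $h^{2\min(s,m)}(\norm{\bH}_{s,\cT_h^{\Omega_C}}^2 + \norm{\curl_h\bH}_{s,\cT_h^{\Omega_C}}^2)$; note $\norm{\curl_h\bH}_{s}\le\norm{\curl_h\bH}_{1/2+s}$. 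The term $\omega\mu_0\norm{\nabla_h(\psi-\tilde\pi_{h,m}\psi)}_{0,\Omega_I}^2$ (the $m\boldsymbol\rho$ contributions cancel since $m=0$ in this argument and the exact $k$ matches itself; more precisely the third slot entry is $0$, so the term is $\omega\mu_0\norm{\nabla_h\psi + k\boldsymbol\rho - \nabla_h\tilde\pi_{h,m}\psi - k\boldsymbol\rho}^2 = \omega\mu_0\norm{\nabla_h(\psi-\tilde\pi_{h,m}\psi)}^2$) is bounded by \eqref{interp1}, yielding $h^{2\min(s,m)}\norm{\psi}_{1+s,\cT_h^{\Omega_I}}^2$.

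Next come the jump terms and the average terms, which is where the commuting diagram properties are essential. Since $\bH\times\n_\Gamma$ and $\psi$ are single-valued on the exact solution, all jumps $\jump{(\bH,\psi,k)}_\cF$, $\jump{\psi\n}_\cF$, $\jump{\psi\tg}_\cE$ vanish, so $\jump{(\bH-\bPi_{h,m}^{\text{curl}}\bH,\psi-\tilde\pi_{h,m}\psi,0)}_\cF = -\jump{(\bPi_{h,m}^{\text{curl}}\bH,\tilde\pi_{h,m}\psi,0)}_\cF$, and similarly for the other jumps. On interior faces in $\cF_h^0(\Omega_C)$ one rewrites the jump using $\jump{\bv\times\n}_F = (\bv_K - \bH)\times\n_K + (\bv_{K'}-\bH)\times\n_{K'}$ (adding and subtracting the single-valued trace of $\bH$), then applies the trace/projection estimate \eqref{proj} together with the interpolation bound \eqref{errorInterp1}; on faces $F\in\cF_h^\Gamma$ one uses precisely the commuting diagram \eqref{commuting2} and Proposition~\ref{commuting3}, which force the tangential jump across $\Gamma$ of $(\bPi_{h,m}^{\text{curl}}\bH, \tilde\pi_{h,m}\psi, 0)$ to reduce to $\Pi_{h,m}^{\text{BDM}}((\bH - \nabla\psi - k\boldsymbol\rho)\times\n_\Gamma) = \bf 0$ plus interpolation remainders, so the boundary jump terms again enjoy optimal order. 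The average terms in $\norm{\cdot}_\ast$, namely $\norm{\mathtt{s}_\cF^{1/2}h_\cF^{1/2}\mean{\sigma^{-1}\curl_h(\bH - \bPi_{h,m}^{\text{curl}}\bH)}_\cF}_{0,\cF_h^{\Omega_C}}$, the corresponding edge term, and $\norm{h_\cF^{1/2}\mean{\nabla_h(\psi-\tilde\pi_{h,m}\psi)}_\cF}_{0,\cF_h^{\Omega_I}}$, are NOT covered by Lemma~\ref{equivalence} because the argument is not piecewise polynomial; instead I would bound $h_F^{1/2}\norm{\mean{\mathbf{w}}_F}_{0,F}$ directly via $h_K^{1/2}\norm{\mathbf{w}}_{0,\partial K}$ and then use the trace estimate \eqref{proj} (applied to $\mathbf{w} = \curl_h\bH - \bPi_{h,m}^{\text{curl}}(\ldots)$ after inserting $\pm {\bf P}^{m-1}_K\curl_h\bH$, or to $\mathbf{w} = \nabla\psi - \nabla\tilde\pi_{h,m}\psi$ similarly), which gives the extra $h_K^{1/2}$ needed to absorb $h_F^{1/2}$ and produces the order $h^{\min(s,m)}$ — this requires the slightly stronger regularity $\curl\,\bH\in\mathbf{H}^{1/2+s}$ so that $\curl_h\bH$ has an $\mathbf H^{1/2}$-trace per element, which is exactly why $\norm{\curl\,\bH}_{1/2+s,\cT_h^{\Omega_C}}$ (rather than $\norm{\cdot}_{s}$) appears in the statement.

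I expect the main obstacle to be precisely the treatment of these average (consistency-type) terms across $\Gamma$ and on the skeleton: one must carefully insert element-wise $\mathbf{L}^2$-projections to create a quantity to which \eqref{proj} applies, track the mismatch between faces of $\cT_h^{\Omega_C}$ and triangles of $\cF_h^\Gamma$ (and their edges $\cE_h$) as was done in Lemma~\ref{equivalence}, and verify that the BDM degrees of freedom in \eqref{BDMfreedom1}--\eqref{BDMfreedom2} together with the modified nodal freedoms \eqref{freedomFb}--\eqref{freedomEb} yield interpolation error bounds of order $h^{\min(s,m)}$ for $(\bH-\bPi_{h,m}^{\text{curl}}\bH)\times\n_\Gamma$ and $\nabla(\psi - \tilde\pi_{h,m}\psi)\times\n_\Gamma$ in the relevant surface norms. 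Once every one of the terms constituting $\norm{(\bH-\bPi_{h,m}^{\text{curl}}\bH,\psi-\tilde\pi_{h,m}\psi,0)}_\ast^2$ has been bounded by $C h^{2\min(s,m)}(\norm{\bH}_{s,\cT_h^{\Omega_C}}^2 + \norm{\curl\,\bH}_{1/2+s,\cT_h^{\Omega_C}}^2 + \norm{\psi}_{1+s,\cT_h^{\Omega_I}}^2)$, taking square roots and combining with \eqref{Cea} yields the claimed estimate.
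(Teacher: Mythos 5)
Your skeleton is the same as the paper's: you take $(\bv,\varphi)=(\bPi^{\text{curl}}_{h,m}\bH,\tilde\pi_{h,m}\psi)$ in \eqref{Cea}, bound the volume terms by \eqref{errorInterp1} and \eqref{interp1}, and treat the three extra average terms of $\norm{\cdot}_\ast$ exactly as the paper does, by inserting $\pm\,{\bf P}^{m-1}_{\cT_h^{\Omega_C}}\curl\,\bH$ (resp. $\pm\,{\bf P}^{m}_{\cT_h^{\Omega_I}}\nabla\psi$), using the transfer bounds \eqref{transfer0}, \eqref{transfer}, \eqref{transfer1} and Lemma~\ref{equivalence} for the piecewise-polynomial parts and Lemma~\ref{v} for the projection parts; your explanation of why $\norm{\curl\,\bH}_{1/2+s,\cT_h^{\Omega_C}}$ (and not only $\norm{\curl\,\bH}_{s}$) enters is also the correct one.

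The genuine gap is in your treatment of the jump (penalty) terms. In the paper these are not estimated at all: they vanish identically, because $\bPi^{\text{curl}}_{h,m}\bH\in\mathbf{H}(\curl,\Omega_C)$ is tangentially continuous across the interior faces of $\cT_h^{\Omega_C}$, $\tilde\pi_{h,m}\psi$ is $\rmH^1$-conforming (and vanishes on $\Sigma$), and across $\Gamma$ the exact identity \eqref{true?} holds by combining \eqref{commuting2}, Proposition~\ref{commuting3}, the transmission condition \eqref{ModelProblem2} and the fact that $\Pi^{\text{BDM}}_{h,m}$ reproduces $\boldsymbol\rho\times\n_\Gamma$ (since $\boldsymbol\rho$ is piecewise linear and tangentially continuous). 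Your alternative plan --- rewrite the interior-face jump by adding and subtracting the trace of $\bH$ and then apply \eqref{proj} together with \eqref{errorInterp1} --- would fail: first, \eqref{proj} is a statement about the $\mathbf{L}^2$-orthogonal projection ${\bf P}^k_K$ and cannot be applied to $\bH-\bPi^{\text{curl}}_{h,m}\bH$; second, and more fundamentally, the weights $h_{\cF}^{-1/2}$ and $h_{\cE}^{-1}$ in the penalty terms require $\norm{(\bH-\bPi^{\text{curl}}_{h,m}\bH)\times\n}_{0,F}=O(h^{\min(s,m)+1/2})$, which the assumed regularity $\bH\in\mathbf{H}^{s}(\cT_h^{\Omega_C})$, $\curl_h\bH\in\mathbf{H}^{1/2+s}(\cT_h^{\Omega_C})$, $s>1/2$, does not provide: a scaled trace argument only gives $O(h^{\min(s,m)-1/2}+h^{s-1/2})$ on the face, so the weighted jump term would lose a full power of $h$ and would not even converge for $s$ close to $1/2$. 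For the same reason your hedge that the jump across $\Gamma$ reduces to zero ``plus interpolation remainders'' of optimal order is not sufficient: any nonzero remainder there is multiplied by $h_{\cF}^{-1/2}$, so the argument needs, and the paper obtains, exact cancellation --- which is precisely the purpose of the enrichment $\tilde\cP_m(K)$ on elements touching $\Gamma$ and of the modified degrees of freedom \eqref{freedomFb}--\eqref{freedomEb} defining $\tilde\pi_{h,m}$. Once you replace your estimation of the jumps by the observation that they are identically zero, the rest of your argument matches the paper's proof.
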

\begin{proof}
Taking $(\bv, \varphi)=(\bPi^{\text{curl}}_{h,m}\bH, \tilde \pi_{h,m} \psi)$ in \eqref{Cea} yields 
\[
\norm{(\bH - \bH_{h}, \psi - \psi_h, k-k_h)} \leq (1 + 2\sqrt{2} M) 
\norm{(\bH - \bPi^{\text{curl}}_{h,m}\bH, \psi - \tilde \pi_{h,m} \psi,0)}_*.
\]
All the jumps terms in the right-hand side of the last inequality are zero since the identities  
\begin{equation}\label{true?}
(\bPi^{\text{curl}}_{h,m}\,\bH)\times \n = \bPi^{\text{BDM}}_{h,m}(\bH \times \n_\Gamma) =  \bPi^{\text{BDM}}_{h,m}((\nabla \psi +k \boldsymbol \rho) \times \n_\Gamma) = \left( \nabla \widetilde{\pi}_{h,m} \psi+ k \boldsymbol \rho \right)\times \n_{\Gamma}
\end{equation}
holds true on $\Gamma$ and we also have that 
$$
\jump{(\psi -\tilde \pi_{h,m} \psi)\n}_{\cF}
=
\jump{(\psi -\tilde \pi_{h,m} \psi) \tg}_{\cE} =0\, ,
$$
by construction. Note that in the last equality of \eqref{true?} we have used the fact that $\boldsymbol \rho$ belongs to $H(\curl; \Omega_I)$ and is a piecewise-linear polynomial. It follows that,
$$
\begin{array}{l}
\norm{(\bH - \bPi^{\text{curl}}_{h,m}\bH, \psi - \tilde \pi_{h,m} \psi,0)}_*^2 \\[.2cm]
\qquad= 
\norm{(\omega\mu)^{1/2}(\bH - \bPi^{\text{curl}}_{h,m}\bH)}^2_{0,\Omega_C} 
+ \norm{\sigma^{-1/2}\curl (\bH - \bPi^{\text{curl}}_{h,m}\bH)}^2_{0,\Omega_C} \\[.2cm]
\qquad+ \omega\mu_0 \norm{\nabla_h (\psi - \tilde \pi_{h,m} \psi)}^2_{0,\Omega_I} +
\norm{\mathtt{s}_{\cF}^{1/2} h_{\cF}^{1/2} \mean{\sigma^{-1}\curl (\bH - \bPi^{\text{curl}}_{h,m}\bH)}_{\cF}}^2_{0,\cF_h^{\Omega_C}}\\[.2cm]
\qquad+ \norm{\mathtt{s}_{\cE}^{1/2} h_{\cE} \mean{\sigma^{-1} \curl (\bH - \bPi^{\text{curl}}_{h,m}\bH)}_{\cE}}^2_{0,\cE_h}
+\norm{h_{\cF}^{1/2} \mean{\nabla (\psi - \tilde \pi_{h,m} \psi)}_{\cF}}^2_{0,\cF_h^{\Omega_I}}.
\end{array}
$$
We deduce from the triangle inequality that, 
\begin{multline*}
\norm{\mathtt{s}_{\cF}^{1/2} h_{\cF}^{1/2} \mean{\sigma^{-1}\curl (\bH - \bPi^{\text{curl}}_{h,m}\bH)}_{\cF}}_{0,\cF_h^{\Omega_C}} = 
\norm{\mathtt{s}_{\cF}^{1/2} h_{\cF}^{1/2} \mean{\sigma^{-1}(\curl\bH - {\bf P}^{m-1}_{\cT_h^{\Omega_C}}\curl\, \bH)}_{\cF}}_{0,\cF_h^{\Omega_C}} \\+ 
\norm{\mathtt{s}_{\cF}^{1/2} h_{\cF}^{1/2} \mean{\sigma^{-1}({\bf P}^{m-1}_{\cT_h^{\Omega_C}}\curl\, \bH - \curl\, \bPi^{\text{curl}}_{h,m}\bH)}_{\cF}}_{0,\cF_h^{\Omega_C}} = A_{\Omega_C} + B_{\Omega_C}\, .
\end{multline*}
Using \eqref{discIneq1} yields 
\begin{multline*}
B_{\Omega_C} \leq C_{\Omega_C} \norm{\sigma^{-1/2} ({\bf P}^{m-1}_{\cT_h^{\Omega_C}}\curl\, \bH - \curl\, \bPi^{\text{curl}}_{h,m}\bH) }_{0,\Omega_C}\\ = 
C_{\Omega_C} \norm{\sigma^{-1/2} {\bf P}^{m-1}_{\cT_h^{\Omega_C}} (\curl\, \bH - \curl\, \bPi^{\text{curl}}_{h,m}\bH) }_{0,\Omega_C} \leq 
C_{\Omega_C} \norm{\sigma^{-1/2}\curl ( \bH - \bPi^{\text{curl}}_{h,m} \bH) }_{0,\Omega_C}
\end{multline*}
and by virtue of \eqref{transfer0} we obtain 
\[
A_{\Omega_C}^2 \leq  \sum_{K\in \cT_h^{\Omega_C}}  h_K 
\norm{ \sigma_K^{-1/2}(\curl\, \bH - {\bf P}^{m-1}_K\curl\, \bH) }^2_{0,\partial K}.
\]
Similarly, we consider the splitting
\begin{multline*}
\norm{\mathtt{s}_{\cE}^{1/2} h_{\cE} \mean{\sigma^{-1}\curl (\bH - \bPi^{\text{curl}}_{h,m}\bH)}_{\cE}}_{0,\cE_h} \leq  
\norm{\mathtt{s}_{\cE}^{1/2} h_{\cE} \mean{\sigma^{-1}(\curl\,\bH - {\bf P}^{m-1}_{\cT_h^{\Omega_C}}\curl\, \bH)}_{\cE}}_{0,\cE_h}\\ + 
\norm{\mathtt{s}_{\cE}^{1/2} h_{\cE} \mean{\sigma^{-1}({\bf P}^{m-1}_{\cT_h^{\Omega_C}}\curl\, \bH - \curl\,\bPi^{\text{curl}}_{h,m}\bH)}_{\cE}}_{0,\cE_h} = A_\Gamma + B_\Gamma\,
\end{multline*}
and use \eqref{discIneq1} to obtain
\[
B_\Gamma \leq C_\Gamma \norm{\sigma^{-1/2} ({\bf P}^{m-1}_{\cT_h^{\Omega_C}}\curl\, \bH - \curl\,\bPi^{\text{curl}}_{h,m}\bH) }_{0,\Omega_C} 
\]
\[
=C_\Gamma \norm{\sigma^{-1/2} {\bf P}^{m-1}_{\cT_h^{\Omega_C}} \left( \curl  ( \bH - \bPi^{\text{curl}}_{h,m} \bH) \right) }_{0,\Omega_C}\leq 
C_\Gamma \norm{\sigma^{-1/2}\curl ( \bH - \bPi^{\text{curl}}_{h,m} \bH) }_{0,\Omega_C}.
\]
Moreover, it follows from  \eqref{transfer} that  
\[
A_\Gamma^2 \leq 
\sum_{T\in \cF_h^\Gamma }
  h_{T}^2 \norm{\sigma_{K_T}^{-1/2} (\curl\,\bH - {\bf P}^{m-1}_K\curl\, \bH)}^2_{0,\partial T}.
\]
Finally,
\begin{multline*}
\norm{h_{\cF}^{1/2} \mean{\nabla (\psi - \tilde \pi_{h,m} \psi)}_{\cF}}_{0,\cF_h^{\Omega_I}} \leq 
\norm{h_{\cF}^{1/2} \mean{\nabla \psi - {\bf P}^m_{\cT_h^{\Omega_I}} \nabla \psi}_{\cF}}_{0,\cF_h^{\Omega_I}}\\ + 
\norm{h_{\cF}^{1/2} \mean{{\bf P}^m_{\cT_h^{\Omega_I}} \nabla \psi - \nabla \tilde \pi_{h,m} \psi)}_{\cF}}_{0,\cF_h^{\Omega_I}} = A_{\Omega_I} + B_{\Omega_I}
\end{multline*}
and we derive from \eqref{discIneq2} and \eqref{transfer1} the following estimates
\[
B_{\Omega_I} \leq C_{\Omega_I} \norm{{\bf P}^m_{\cT_h^{\Omega_I}} \nabla \psi - \nabla \tilde \pi_{h,m} \psi}_{0,\Omega_I}\\ \leq 
C_{\Omega_I} \norm{\nabla (\psi - \tilde \pi_{h,m} \psi)}_{0,\Omega_I},
\]
\[
A_{\Omega_I}^2 \leq \sum_{K\in \cT_h^{\Omega_I}} h_K \norm{\nabla \psi - {\bf P}^m_K \nabla \psi }^2_{0,\partial K}.
\]
Combining the last inequalities we deduce that 
\begin{multline*}
\norm{(\bH - \bPi^{\text{curl}}_{h,m}\bH, \psi - \tilde \pi_{h,m} \psi)}_*^2 \leq C \Big(
\norm{\bH - \bPi^{\text{curl}}_{h,m}\bH}^2_{0,\Omega_C} + \norm{\curl (\bH - \bPi^{\text{curl}}_{h,m}\bH)}^2_{0,\Omega_C} \\+  \norm{\nabla_h (\psi - \tilde \pi_{h,m} \psi)}^2_{0,\Omega_I} + \sum_{K\in \cT_h^{\Omega_C}}  h_K 
\norm{ \curl\bH - {\bf P}^{m-1}_K\curl \bH }^2_{0,\partial K} \\+ \sum_{T\in \cF_h^\Gamma }
  h_{T}^2 \norm{\curl\bH - {\bf P}^{m-1}_K\curl \bH}^2_{0,\partial T}+ \sum_{K\in \cT_h^{\Omega_I}} h_K \norm{\nabla \psi - {\bf P}^m_K \nabla \psi }^2_{0,\partial K}
\Big)
\end{multline*}
with $C>0$ independent of $h$. Applying the interpolation error estimates given by \eqref{errorInterp1}, \eqref{interp1} and \eqref{proj} we obtain 
\begin{multline*}
\norm{(\bH - \bPi^{\text{curl}}_{h,m}\bH, \psi - \tilde \pi_{h,m} \psi)}_* \leq C \Big( h^{\min(s, m)}  (
\norm{\bH}_{s, \cT_h^{\Omega_C}} + \norm{\curl \bH}_{s, \cT_h^{\Omega_C}}) + 
h^{\min(s, m)} \norm{\psi}_{1+s, \cT_h^{\Omega_I}}\\  + h^{\min(1/2+s, m)} 
\norm{\curl \bH}_{1/2+s, \cT_h^{\Omega_C}} + h^{\min(s, m+1)}  \norm{\nabla \psi}_{s, \cT_h^{\Omega_I}}
\Big)
\end{multline*}
and the result follows.
\end{proof}

\bibliographystyle{siam}

\end{document}